		\newtheorem{theorem}{Theorem}
		\newtheorem{definition}[theorem]{Definition}
		\newtheorem{corollary}[theorem]{Corollary}
		\newtheorem{lemma}[theorem]{Lemma}
		\newtheorem{proposition}[theorem]{Proposition}
		\newtheorem*{teorema}{Theorem}
                \newcommand{\keywords}[1]{\par\addvspace\baselineskip\noindent\textbf{Keywords:}\enspace\ignorespaces#1}
                \newcommand{\AMSclassification}[1]{\par\addvspace\baselineskip\noindent\textbf{Mathematical subject classification:}\enspace\ignorespaces#1}
		\DeclareMathOperator*{\argmax}{arg\,max}
		\DeclareMathOperator{\Dia}{diam}
       \title{A Ruelle-Perron-Frobenius theorem for \\ expanding circle maps with an indifferent  fixed point}
        \author{Eduardo Garibaldi\thanks{Supported by CNPq grant 304792/2017-9 and FAPESP grant 2019/10485-8.} \\ 
        \small{Department of Mathematics, University of Campinas, 13083-859 Campinas, Brazil} \\ \small{(email: garibaldi@ime.unicamp.br)} \\ ~ \\
         Irene Inoquio-Renteria\thanks{Supported by FONDECYT 11130341 and BCH-CONICYT postdoctoral fellowship 74170014.} \\ \small{ICFM, Universidad Austral de Chile, casilla 567 Valdivia, Chile} \\
               \small{(email: ireneinoquio@uach.cl)}}
\begin{document}
			\maketitle
			
			\begin{abstract}
			In this note, we establish an original result for the thermodynamic formalism in the context 
			of expanding circle transformations with an indifferent fixed point. For an observable whose 
			continuity modulus is linked to the dynamics near such a fixed point, by identifying an appropriate 
			linear space to evaluate the action of the transfer operator, we show that there is a strictly positive eigenfunction
			associated with the maximal eigenvalue given as the exponential of the topological pressure.
			Taking into account also the corresponding eigenmeasure, the invariant probability thus obtained is proved to be
			the unique Gibbs-equilibrium state of the system. 	
			
			\keywords{non-uniformly expanding dynamics, intermittent maps,  thermodynamic formalism, equilibrium states, modulus of continuity.}
			
			\AMSclassification{37D25, 37E05,  37D35, 26A12, 26A15.}		
			\end{abstract}
	
\section{Introduction}

The pioneering works of Sinai, Ruelle and Bowen provided the impetus for the development of a fruitful area in ergodic theory of differentiable systems.
Thermodynamic formalism for uniformly hyperbolic systems and H\"older continuous potentials has today a well-established theoretical ground and contributions to extend it consider different contexts and approaches. 
In this work, we focus on expanding maps on the circle that have an indifferent fixed  point and we take into account potentials with a modulus of continuity whose only imposition is dictated by the behavior of the dynamics in a neighborhood of this fixed point.

For frameworks that are similar to ours, Ruelle-Perron-Frobenius type theorems and theorems about the existence and uniqueness of equilibrium states, central results of this article, are recorded in the literature. 
The main lines of research focus on potentials obeying properties related to fundamental entities of thermodynamic formalism, such as the topological pressure and the transfer operator, and/or belonging to certain specific classes of regularity.

Without any intention of being exhaustive, it is worth mentioning some works to illustrate advances in this current.
For piecewise monotone interval transformations, Hofbauer and Keller \cite{HK82} have studied equilibrium states associated with potentials of bounded variation whose oscillation is strictly smaller than the topological entropy. 
In the same dynamic context, Denker, Keller and Urba\'nski \cite{DKU90} proved that, on each topologically transitive component, there is at most one equilibrium state associated with a potential either of bounded variation or with bounded distortion under the transformation, which, besides one of these regularity conditions, has as its supremum a value strictly smaller than its topological pressure. 
Existence and uniqueness of equilibrium states were obtained by Liverani, Saussol and Vaienti \cite{LSV98} for a class of piecewise monotone transformations on a totally ordered, compact set and for observables named as contracting potentials. 
Among the properties requested to be a contracting potential \cite[Definition~3.4]{LSV98},  there is the demand that the supremum of one of its Birkhoff sums is strictly smaller than the logarithm of the infimum of the corresponding iterate of the transfer operator applied to the function identically equal one. 
For smooth interval maps, the condition on the potential introduced by Hofbauer and Keller was used by Bruin and Todd \cite{BT08} to propose a proof for existence and uniqueness of equilibrium states by means of an inducing scheme.  
In \cite{LR14}, for a sufficiently regular one-dimensional map satisfying a weak form of hyperbolicity, Li and Rivera-Letelier showed that, given a H\"older continuous potential, the supremum of one of its Birkhoff averages is strictly smaller than its topological pressure, a condition that guarantees in particular the existence and uniqueness of equilibrium states. 
Generalizing an optimal transportation method successfully applied for the thermodynamic formalism in the uniformly expanding context \cite{KLS15}, recently Kloeckner \cite{Klo20} was able to prove a Ruelle-Perron-Frobenius theorem and  to study equilibrium states for non-uniformly expanding maps and observables named as flat potentials. 
Flatness is a property that requires a uniform regularity control on all Birkhoff sums taken along pairing trajectories following a common transition kernel (see  \cite[Definition 2.13]{Klo20} for technical details).

One of the main contributions of our work is the identification of an easily verifiable property relating, in a neighborhood of the indifferent fixed point, the joint behavior of the dynamics and a pair of moduli of continuity  (see condition~\eqref{condicaosuficientecompatibilidade} below). Here one modulus describes the regularity of a potential and the other one  indicates the space on which the transfer operator's action should be considered in order to obtain relevant spectral information. 
This is a sufficient condition to ensure key results of the thermodynamic formalism from known methods and techniques, specially from potential theory or harmonic analysis. 
In practical terms, it is possible, for example, to fix a dynamics on the circle among the members of the analyzed family and without difficulty to determine possible classes of regularity of potentials for which a Ruelle-Perron-Frobenius theorem and the existence and uniqueness of  Gibbs-equilibrium states can be proved. 
In section~1.4 of \cite{GI20} there are illustrations of this simple compatibility procedure for a particular situation. 
We provide more general examples along this paper.

Roughly speaking, the condition of coherence between the dynamics around the indifferent fixed point and both classes of regularity guarantees that 
these moduli of continuity  are nicely related along backward orbits (see Definition~\ref{modulosepreimagens} and Propostion~\ref{compatibilidadesuficiente}).
This feature thus allows establishing a direct Ruelle-Peron-Frobenius theorem in 
a non-uniformly hyperbolic setting without inducing: we obtain a strictly positive eigenfunction 
of the transfer operator when looking at its action on the space of the functions with the suggested
regularity.
Existence and uniqueness of equilibrium state are discussed taking advantage of Rokhlin formula,
being useful to have the eigenequation to eliminate the possibility of the Dirac delta at the indifferent fixed point being an aspiring to equilibrium state.

In a preliminary version of this article, we did not emphasize the actual extent of our results.
In particular, it was pointed out that there is an overlap between our results and those 
obtained by  Kloeckner \cite{Klo20}. It should first be noted that, in cases where both works apply, 
the proofs are independent, and our strategy is dissociated from the approach in \cite{Klo20}, 
which is focused on determining a contraction rate of the dual of the transfer operator through couplings, a method introduced in \cite{Sta17}. Here the proposal of a comprehensive scenario by means of a compatibility between dynamics and moduli of continuity allows us to go further: 
the generality of the maps makes it very easy to present examples and to work with classes of regularity far beyond the usual H\"older modulus environment. 
Corollaries~\ref{corA} and~\ref{corB}, for instance, illustrate the fact that our results cover a range of examples and how simple can it be to ensure their application in wider situations.
Furthermore, from a mainly theoretical perspective, 
it is relevant to have a more accurate understanding of how the specific form of the indifferent fixed point influences thermodynamic formalism. 
In significant cases, one cannot deal with the H\"older regularity class, being necessary to consider different moduli to ensure a Ruelle-Perron-Frobenius theorem. 
The general formulation developed here seeks to contribute to a global understanding of these aspects, as clearly is the purpose of Kloeckner's work as well.

\subsection{Dynamics and Regularity Classes}

        Let $\mathbb T= \mathbb R/ \mathbb  Z= [0,1)$ denote the circle endowed with the standard metric
					$$ d(x,y) = \min\{|x-y|, \, |x-y \pm 1|\}, \quad x, y \in [0,1). $$
        We consider a family $ \mathscr F$ of continuous maps  $ T:\mathbb T\to \mathbb T$ which are non-uniformly 
				expanding with an indifferent fixed point. More precisely, we suppose that $T$ is of the form $T(x):=x(1+ V(x))\mod 1$, for all $ x \in [0, 1) $,
        where the continuous and increasing function $V:[0,+\infty)\to [0,+\infty)$ satisfies that $ V(1) $ is a positive integer and for some $\sigma\ge 0$
        \begin{equation*}
        \lim_{x\to 0} \frac{V(tx)}{V(x)} = t^{\sigma}, \textrm{ for all } t>0.
        \end{equation*}
        When $\sigma>0$, $V$ is called \emph{regularly varying with index $\sigma$}, and when $\sigma=0$, $V$ is called \emph{slowly varying}. 
	For the main properties of these families of functions, we refer the reader to \cite{Sen76}.
	We remark that any map $ T $ in $ \mathscr F $ is topologically mixing. As a matter of fact, $ T $ is topologically exact in the sense that, for every 
	open nonempty set $ U \subset \mathbb T $, there is $ M \ge 1 $ for which $ T^M (U) = \mathbb T $.
				 
        By a \emph{modulus of continuity}, we mean a continuous and non-decreasing function $\omega:[0,+\infty)\to[0,+\infty)$ with $ \omega(0) = 0 $.
        Let $\mathcal M$ be the set of all concave modulus of continuity. 
	For  $\omega\in \mathcal M$, we denote by $\mathscr{C}_{\omega}(\mathbb T)$ 
	the linear space of functions~$\varphi:\mathbb T\to~\mathbb R$ 
				with a multiple of $\omega$ as modulus of continuity:
        $ |\varphi(x)-\varphi(y)|\le C \omega(d(x,y)) $ for some constant $ C > 0 $, for all $ x,y\in [0,1) $.	
        For  $\varphi \in \mathscr C_\omega(\mathbb T)$, we denote the smallest constant that guarantees this condition of regularity by
$$\vert \varphi \vert_\omega :=\sup_{x\neq y}\frac{\vert\varphi(x)-\varphi(y)\vert}{\omega(d(x,y))}.$$

A central notion in this work will be the $T$-compatibility of a concave modulus of continuity with respect to another one.

\begin{definition}\label{modulosepreimagens}
For a map $T \in \mathscr F $, given $ \omega, \Omega \in \mathcal M $, we say that $ \Omega $ is $T$-compatible with respect to $ \omega $ whenever the following property
holds. 

\medskip

\noindent There are constants $\varrho_1>0$ and $C_1>0$ such that, given any sequence $\{x_k\}_{k\ge 0} $ in $\mathbb T$ with $T(x_{k+1})=x_k$ for $k\ge 0$, and a 
	point $y_0\in \mathbb T $ with $d(x_0,y_0) < \varrho_1$, there is a unique pre-orbit $\{y_k\}_{k\ge 1} $ of $ y_0 $ (that is, $T(y_{k+1})=y_k$ for $k\ge 0$) fulfilling
	\begin{align*}
	& d(x_k,y_k) \le d(x_0,y_0) < \varrho_1 \qquad \text{and}  \\
	& \Omega\big(d(x_k,y_k)\big)+ C_1\, \sum_{j=1}^k \omega\big(d(x_j,y_j)\big) \le \Omega\big(d(x_0,y_0)\big) \qquad \forall\, k\ge 1.
	\end{align*}
	Moreover, this correspondence between pre-orbits of $ x_0 $ and $ y_0 $ is one-to-one.
\end{definition}

We will show (see Proposition~\ref{compatibilidadesuficiente}) that a sufficient condition for $T$-compatibility of $\Omega$ with respect to $\omega$ is 
\begin{equation}\label{condicaosuficientecompatibilidade}
 \liminf_{x\to0} \frac{V(x)}{\omega(x)} \Big ( \Omega \big( (1+c) x \big) - \Omega (x) \Big) > 0 
\end{equation} 
for any sufficiently small constant $ c > 0 $. Note, in particular, that such a property implies  $ \liminf_{x\to 0}\frac{\omega(x)}{V(x)}=0 $. 

This condition allows us to easily provide families of examples of $T$-compatibility.  
By way of illustration, suppose that $ V $ and $ V^2 $ are concave near to the origin. 
If we consider $ \omega = V^2 $ and $ \Omega = V $ in this neighborhood,
then it is clear that condition~\eqref{condicaosuficientecompatibilidade}     
holds whenever $ \sigma > 0 $.
     				
For a more concrete situation, remember that a prototypical example in 
$\mathscr F$ is the Manneville-Pomeau map defined, for a fixed $s\in(0,1)$, as $ T_s(x):= x(1 + x^s) \mod 1$.
Consider the class of modulus of continuity $\omega_{\alpha,\beta}:[0,+\infty)\to [0,+\infty)$, defined for
$0\le\alpha<1$ and $\beta\ge 0$ with $ \alpha + \beta > 0 $ as
\begin{equation*}	
\omega_{\alpha,\beta}(x):=\left\{
\begin{array}{ll}
x^\alpha(-\log x)^{-\beta},&0< x < x_0,  \\
x_0^\alpha(-\log x_0)^{-\beta},& x \ge x_0,
\end{array}
\right.
\end{equation*}
where $x_0 = x_0(\alpha, \beta) $ is taken small enough so that $\omega_{\alpha,\beta}$ is concave.
This class was taken into account in the work of Kloeckner \cite{Klo20}.
While $\omega_{\alpha,0}$ is reduced to the H\"older continuity, $\omega_{0,\beta}$ determines a class that is 	
larger than local H\"older continuity. 
Note that for $x$ sufficiently small,
\begin{equation}\label{desigualdadealfabeta}
\omega_{\alpha,\beta}\big( (1+c) x \big) \ge (1+c)^{\alpha} \omega_{\alpha,\beta}(x).
\end{equation}
From this inequality, it follows that, for all $ s \in (0, \alpha) $, the modulus $ \omega_{\alpha - s, \beta} $ is 
$T_s$-compatible with respect to $ \omega_{\alpha, \beta} $ since condition~\eqref{condicaosuficientecompatibilidade} is immediately checked.
        
To provide an application example for the slowly varying scenario, consider the family of maps $ S_k(x) = x (1 + W_k(x))  \mod 1$, with 
$ k $ a positive integer, where in a neighborhood of the origin $ W_k(x) $ is of the form $ A_k (\log^k 1/x)^{-1} $ for some constant $ A_k > 0 $.  
(Here $ \log^k $ stands for the $k$-times composition of the logarithm function.) Suppose that $ \omega(x) $ and $ \Omega(x) $ are defined,
respectively, as $ (\log^k 1/x)^{-1} (\log 1/x)^{-1} (\log^2 1/x)^{-2} $ and $ (\log^2 1/x)^{-1} $ in a small neighborhood of the origin so that both
are concave. From the calculus exercise
$$ \lim_{x\to 0} \log 1/x \, \log \Big ( \frac{\log 1/x}{\log 1/(1+c)x} \Big ) = \log(1+c), $$
one may verify that condition~\eqref{condicaosuficientecompatibilidade} holds, and therefore the $S_k$-compatibility of (up to some convenient truncation on the right)  
the modulus $ (\log^2 1/x)^{-1} $ with respect to $ (\log^k 1/x)^{-1} (\log 1/x)^{-1} (\log^2 1/x)^{-2} $.

\subsection{Thermodynamics and Main Results}

Let $f$ be a  real continuous map with modulus of continuity $\omega\in \mathcal M$.  We define the \emph{transfer operator} associated with $f$ as 
$$\mathscr L_f\phi(x):=\sum_{y\in T^{-1}(x)}e^{f(y)} \phi(y), \quad \quad \forall \; \phi\in  C^0(\mathbb T),$$
where $C^0(\mathbb T)$ denotes the linear space of continuous functions endowed with the uniform norm $\vert\vert \cdot \vert  \vert_\infty. $ 
We have that $\mathscr L_f$ is a bounded linear operator. 
For every $n\ge 1$ and $x\in \mathbb T$, consider the Birkhoff sum
$S_n f(x):=\sum_{j=0}^{n-1} f\circ T^j(x).$ Then, clearly
$$\mathscr L_f^n\phi(x)= \sum_{y\in T^{-n}(x)}e^{S_nf(y)} \phi(y).$$
Let $\mathscr L_f^*$  denote the operator on finite signed Borel measures defined by
	$$
	\int \phi \, d(\mathscr L_f^*m)=\int \mathscr L_f \phi \,dm, \quad \forall \, \phi \in C^0(\mathbb T).
	$$
In other terms,   $\mathscr L_f^*$ is the \emph{dual operator} of $\mathscr L_f$.
Here we focus on its restriction on the convex subset  $ \text{Prob}(\mathbb T) $ of Borel probability measures on $\mathbb T$.
Let us represent by $ M(\mathbb T,T)$  the space of all $T$-invariant probability measures on $\mathbb T$.  
For a probability $m\in M(\mathbb T,T)$, we denote by $h_m(T)$ its metric entropy. 
For a continuous potential $f: \mathbb T\to \mathbb R$, we introduce by means of the variational principle the topological pressure as 
\begin{equation}\label{principio variacional}
P(T,f):=  \sup_{m\in M(\mathbb T,T)}\left\{h_m(T)+\int f \, dm\right\}.
\end{equation}

\medskip

Our central result states that, whenever $T$-compatibility can be verified, a Ruelle-Perron-Frobenius theorem holds. 

 \begin{theorem}\label{t:Theorem 1}
 	Let $T:\mathbb T\to \mathbb T$ be a map in $\mathscr F$ such that $T(x)=x(1+V(x))\mod 1$. 
	Let $ \Omega \in \mathcal M $ be a $T$-compatible modulus of continuity with respect to $\omega \in \mathcal M$. 
	If $f\in \mathscr C_{\omega}(\mathbb T)$, there exists a probability measure $\nu \in \text{Prob}(\mathbb T)$  and a positive constant  $\chi$  such that 
 	$$\mathscr L^*_f\nu= \chi \nu.$$
	The number $\chi$ is a simple eigenvalue of the operator $\mathscr L_f$ and there is a positive function $h\in \mathscr C_\Omega(\mathbb T)$ such that 
		$$\mathscr L_f h= \chi h.$$ 
       	The constant $ \chi $ is a maximal eigenvalue in the sense that the $ \mathscr L_f  $ acting on complex-valued continuous functions does not admit as eigenvalue another constant of absolute value greater than or equal to $ \chi $. 
       Moreover, supposing that $\int h \, d\nu=1$, for every continuous function $\phi,$ the sequence $ \{\chi^{-n}\mathscr L_f^n\phi\} $ converges uniformly on $\mathbb T$ to $h \int \phi \, d\nu$ as $n$ goes to infinity. 
 \end{theorem}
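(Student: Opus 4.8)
The strategy is the classical Ruelle--Perron--Frobenius scheme; $T$-compatibility enters only to produce a \emph{uniform-in-$n$} bounded-distortion estimate along backward orbits, after which the usual arguments apply without inducing. First I would produce the eigenmeasure: since $T$ is surjective (its degree is $V(1)+1\ge 2$), $\mathscr L_f\mathbf 1$ is continuous and bounded below by $e^{-\|f\|_\infty}>0$, so $m\mapsto\mathscr L_f^*m\big/\!\int\mathscr L_f\mathbf 1\,dm$ is a weak-$*$ continuous self-map of the compact convex set $\mathrm{Prob}(\mathbb T)$; by Schauder--Tychonoff it has a fixed point $\nu$, that is $\mathscr L_f^*\nu=\chi\nu$ with $\chi:=\int\mathscr L_f\mathbf 1\,d\nu>0$. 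Iterating gives $(\mathscr L_f^*)^n\nu=\chi^n\nu$, hence $\int\chi^{-n}\mathscr L_f^n\mathbf 1\,d\nu=1$ for every $n$.

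The core estimate comes next. Fix $x,x'\in\mathbb T$ with $d(x,x')<\varrho_1$. Applying Definition~\ref{modulosepreimagens} to the backward orbits of $x$, one pairs each $y\in T^{-n}(x)$ bijectively with a $y'\in T^{-n}(x')$ (writing $x=x_0,x_1,\dots,x_n=y$ and $x'=x'_0,\dots,x'_n=y'$ for the intermediate points, so that $T^jy=x_{n-j}$ and $T^jy'=x'_{n-j}$) with $d(T^jy,T^jy')\le d(x,x')$ and $\sum_{i=1}^n\omega\big(d(x_i,x'_i)\big)\le C_1^{-1}\Omega\big(d(x,x')\big)$. Since $S_nf(y)=\sum_{i=1}^nf(x_i)$ and $S_nf(y')=\sum_{i=1}^nf(x'_i)$, this yields $|S_nf(y)-S_nf(y')|\le |f|_\omega C_1^{-1}\,\Omega(d(x,x'))$ \emph{uniformly in $n$}. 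From here everything is bookkeeping: comparing $\chi^{-n}\mathscr L_f^n\mathbf 1(x)=\sum_{y\in T^{-n}(x)}\chi^{-n}e^{S_nf(y)}$ term by term with the same quantity at $x'$ gives a Harnack inequality $\chi^{-n}\mathscr L_f^n\mathbf 1(x)\le e^{|f|_\omega\Omega(\varrho_1)/C_1}\,\chi^{-n}\mathscr L_f^n\mathbf 1(x')$ when $d(x,x')<\varrho_1$, which chained across $\mathbb T$ in boundedly many steps yields $\sup\le K\inf$ with $K$ independent of $n$; together with $\int\chi^{-n}\mathscr L_f^n\mathbf 1\,d\nu=1$ this forces $1/K\le\chi^{-n}\mathscr L_f^n\mathbf 1\le K$ for all $n$, and the same comparison (now using $|e^t-1|\le|t|e^{|t|}$) bounds $|\chi^{-n}\mathscr L_f^n\mathbf 1|_\Omega$ uniformly in $n$; concavity of $\Omega$ (which makes $\Omega$ bounded and gives $\Omega(t)\ge\Omega(1)t$ near $0$, so that $\mathscr C_\Omega$ contains the Lipschitz functions and is dense in $C^0$) keeps these constants clean. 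Arzel\`a--Ascoli applied to the Ces\`aro averages $N^{-1}\sum_{n=0}^{N-1}\chi^{-n}\mathscr L_f^n\mathbf 1$ then extracts a uniform limit $h\in\mathscr C_\Omega(\mathbb T)$ with $1/K\le h\le K$, $\int h\,d\nu=1$, and $\mathscr L_fh=\chi h$ (the error $N^{-1}[\chi^{-N}\mathscr L_f^N\mathbf 1-\mathbf 1]$ vanishes as $N\to\infty$ since $\chi^{-N}\mathscr L_f^N\mathbf 1$ is bounded).

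For the convergence, pass to the normalized operator $\widehat{\mathscr L}\phi:=\chi^{-1}h^{-1}\mathscr L_f(h\phi)$, which satisfies $\widehat{\mathscr L}\mathbf 1=\mathbf 1$ and $\widehat{\mathscr L}^*\widehat\nu=\widehat\nu$, where $\widehat\nu$ is the probability $h\,d\nu$. As $h$ is bounded away from $0$ one has $\log h\in\mathscr C_\Omega$, so the pairing estimate above applies verbatim to $\widehat{\mathscr L}$ and shows $\{\widehat{\mathscr L}^n\phi\}_n$ is bounded in $\mathscr C_\Omega$ — hence equicontinuous and, since $\|\widehat{\mathscr L}^n\phi\|_\infty\le\|\phi\|_\infty$, precompact in $C^0(\mathbb T)$ — whenever $\phi\in\mathscr C_\Omega(\mathbb T)$. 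Positivity and $\widehat{\mathscr L}\mathbf 1=\mathbf 1$ make $M_n:=\max\widehat{\mathscr L}^n\phi$ non-increasing and $m_n:=\min\widehat{\mathscr L}^n\phi$ non-decreasing, with $m_n\le\int\phi\,d\widehat\nu\le M_n$; say $M_n\downarrow M^*$ and $m_n\uparrow m^*$. If $M^*>m^*$, a subsequential uniform limit $g$ of $\{\widehat{\mathscr L}^n\phi\}$ satisfies $\max\widehat{\mathscr L}^jg=M^*$ for every $j\ge 0$, while $\{\,\widehat{\mathscr L}^jg=M^*\,\}=\mathbb T\setminus T^j(\{g<M^*\})$, which is empty for $j$ large because $T$ is topologically exact and $\{g<M^*\}$ is open and nonempty — a contradiction. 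Hence $M^*=m^*$ and $\widehat{\mathscr L}^n\phi\to\int\phi\,d\widehat\nu$ uniformly for $\phi\in\mathscr C_\Omega$, and then for every $\phi\in C^0(\mathbb T)$ (also complex-valued) by density of $\mathscr C_\Omega$ and $\|\widehat{\mathscr L}^n\|_{C^0\to C^0}\le 1$. Since $\widehat{\mathscr L}^n(\phi/h)=\chi^{-n}h^{-1}\mathscr L_f^n\phi$, this rewrites as $\chi^{-n}\mathscr L_f^n\phi\to h\int(\phi/h)\,d\widehat\nu=h\int\phi\,d\nu$, the asserted convergence.

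Simplicity and maximality are then formal. If $\mathscr L_f\psi=\chi\psi$ with $\psi\in C^0$, then $\psi=\chi^{-n}\mathscr L_f^n\psi\to h\int\psi\,d\nu$, so $\psi\in\mathbb R h$; if moreover $(\mathscr L_f-\chi)^2\psi=c\,h$, then $\chi^{-n}\mathscr L_f^n\psi=\psi+n(c/\chi)h$ converges only when $c=0$, so the generalized eigenspace is $\mathbb R h$ and $\chi$ is a simple eigenvalue. If $\mathscr L_f\psi=\lambda\psi$ with $\psi\ne 0$ and $|\lambda|\ge\chi$, then $(\lambda/\chi)^n\psi=\chi^{-n}\mathscr L_f^n\psi$ must remain bounded (forcing $|\lambda|=\chi$) and must converge (forcing $\lambda=\chi$, as the only unimodular number whose powers converge is $1$); thus $\chi$ is maximal in the stated sense. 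I expect the only genuinely delicate point to be the uniform-in-$n$ distortion bound $|S_nf(y)-S_nf(y')|\le C\,\Omega(d(x,x'))$: near the indifferent fixed point the relevant Birkhoff sums are not summable along the naive H\"older pairing, so one could not obtain this with $\omega$ in place of $\Omega$ — but this is exactly the summability clause built into Definition~\ref{modulosepreimagens}, whose verification Proposition~\ref{compatibilidadesuficiente} reduces to the elementary inequality~\eqref{condicaosuficientecompatibilidade}. Everything else above is standard once this estimate is in hand.
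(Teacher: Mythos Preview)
Your proposal is correct and follows the same overall Ruelle--Perron--Frobenius scheme as the paper: the decisive input in both is the uniform-in-$n$ distortion bound $|S_nf(y)-S_nf(y')|\le |f|_\omega C_1^{-1}\Omega(d(x,x'))$ coming straight from Definition~\ref{modulosepreimagens}, after which only standard arguments remain. The implementation details differ in a few places worth noting. For the eigenfunction, the paper applies Schauder--Tychonoff directly to the cone $\Lambda=\{\phi\ge 0:\int\phi\,d\nu=1,\ \phi(x)\le\phi(y)e^{\kappa_f\Omega(d(x,y))}\text{ for }d(x,y)<\varrho_1\}$, showing it is compact and invariant under $\chi^{-1}\mathscr L_f$; you instead extract $h$ from Ces\`aro averages of $\chi^{-n}\mathscr L_f^n\mathbf 1$ --- both routes are classical. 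For simplicity and maximality, the paper argues independently of the convergence (a minimum-of-$\phi/h$ argument for simplicity, and a direct analysis of $|\phi|$ under $\mathscr L_{\tilde f}$ for maximality), whereas you deduce both as immediate corollaries of $\chi^{-n}\mathscr L_f^n\psi\to h\int\psi\,d\nu$; your route is slightly shorter here. For the convergence itself, the paper shows equicontinuity of $\{\chi^{-n}\mathscr L_f^n\phi\}$ directly for every $\phi\in C^0(\mathbb T)$, while you prove it first for $\phi\in\mathscr C_\Omega$ and then pass to $C^0$ by density; both work, and the mixing/exactness argument you give for $\max\widehat{\mathscr L}^n\phi\downarrow\min\widehat{\mathscr L}^n\phi$ matches the paper's. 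One small typo: where you write $(\mathscr L_f-\chi)^2\psi=c\,h$ you mean $(\mathscr L_f-\chi)\psi=c\,h$; the subsequent computation $\chi^{-n}\mathscr L_f^n\psi=\psi+n(c/\chi)h$ is the right one for that hypothesis.
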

 
We are also able to establish the existence and uniqueness of Gibbs-equilibrium states.
 
 \begin{theorem}\label{t:Theorem 2}
In the context of Theorem~\ref{t:Theorem 1}, the measure $\mu := h\nu $ is a $T$-invariant probability such that 
		$$ h_{\mu}(T)+ \int f  \, d\mu = \log \chi = P(T,f). $$ 
For any $m\in  M(\mathbb T,T)$ with $m \neq \mu$, one has $P(T,f)>h_{m}(T)+ \int f \, dm$.
In particular, the probability $ \mu $ is the unique equilibrium measure associated with $ f $.
Furthermore, $ \mu $ is a Gibbs measure in the sense that, for every sufficiently small $ r > 0 $, there is a constant $ K_r > 0 $ such that,  
for  $ x\in \mathbb T$ and $ n \ge 1$,
		$$K_r^{-1}\le \frac{\mu\big(  \{y : d(T^j(x), T^j(y)) < r, \,\, 0 \le j \le  n\} \big)}{e^{S_n f(x)-nP(T, f)}}\le K_r.$$
 \end{theorem}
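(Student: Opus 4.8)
The plan is to derive Theorem~\ref{t:Theorem 2} from the eigendata $(\chi,h,\nu)$ of Theorem~\ref{t:Theorem 1} together with the conformality of $\nu$ and classical Rokhlin-type arguments. First I would record three elementary facts. From the pointwise identity $\mathscr L_f\big((\phi\circ T)\,\psi\big)=\phi\,\mathscr L_f\psi$, taking $\psi=h$ and integrating against $\nu$ gives $\int\phi\circ T\,d\mu=\chi^{-1}\int\phi\,\mathscr L_f h\,d\nu=\int\phi\,d\mu$, so $\mu=h\nu$ is a $T$-invariant probability (it is a probability by the normalization $\int h\,d\nu=1$). Unwinding $\int\mathscr L_f\phi\,d\nu=\chi\int\phi\,d\nu$ on Borel sets where $T$ is injective shows $\nu$ is $\chi e^{-f}$-conformal, hence the Jacobian of $\mu$ is $J_\mu(T)=\chi e^{-f}(h\circ T)/h$ and $1/J_\mu(T)=e^{g}$ with the continuous normalized potential $g:=f-\log\chi+\log h-\log h\circ T$, which satisfies $\sum_{y\in T^{-1}(x)}e^{g(y)}\equiv1$. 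Finally, evaluating $\mathscr L_f h=\chi h$ at the indifferent fixed point $0$, whose preimage set contains $0$ and at least one further point, forces $e^{f(0)}h(0)<\chi h(0)$, i.e.\ $f(0)<\log\chi$; this observation will later exclude $\delta_0$ from being an equilibrium state.

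Next I would establish $h_\mu(T)+\int f\,d\mu=\log\chi=P(T,f)$. Using the natural Markov partition $\mathcal P$ of $T$ (its finitely many full branches of monotonicity), which is $m$-generating for every $T$-invariant $m$ because for $m$-a.e.\ $x$ the orbit meets $\{d(\cdot,0)\ge\varepsilon\}$ with positive frequency unless $m=\delta_0$, forcing $\operatorname{diam}\mathcal P^{(n)}(x)\to0$, Rokhlin's formula yields $h_\mu(T)=\int\log J_\mu(T)\,d\mu=\log\chi-\int f\,d\mu$ once the $\log h$ terms cancel by invariance; in particular $P(T,f)\ge\log\chi$ by~\eqref{principio variacional}. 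For the reverse inequality, for any invariant $m$ and any injectivity partition $\mathcal Q$ one has $h_m(T,\mathcal Q)\le H_m\big(\mathcal Q\mid T^{-1}\mathscr B\big)=-\int\log\mathbb E_m[\mathbf 1_{\mathcal Q(x)}\mid T^{-1}\mathscr B]\,dm$, whence, adding $\int g\,dm$ and applying Jensen to the concave $\log$,
$$h_m(T,\mathcal Q)+\int g\,dm\le\log\int\frac{e^{g(x)}}{\mathbb E_m[\mathbf 1_{\mathcal Q(x)}\mid T^{-1}\mathscr B](x)}\,dm(x)=\log\int\Big(\sum_{y\in T^{-1}(Tx)}e^{g(y)}\Big)dm(x)=0,$$
where the middle equality conditions on $T^{-1}\mathscr B$ and uses the normalization of $g$. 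Passing to a generating sequence of injectivity partitions gives $h_m(T)+\int f\,dm\le\log\chi$ for all invariant $m$, so $P(T,f)=\log\chi$; and $\delta_0$ fails to achieve the supremum since $g(0)=f(0)-\log\chi<0$ strictly.

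For uniqueness I would analyse the equality case: if $m$ is an equilibrium state, equality must hold in the Jensen step, forcing $\mathbb E_m[\mathbf 1_{\mathcal Q(x)}\mid T^{-1}\mathscr B](x)=e^{g(x)}$ for $m$-a.e.\ $x$ and every injectivity partition, which is equivalent to $\mathscr L_g^* m=m$. Rewriting the convergence statement of Theorem~\ref{t:Theorem 1} via $\mathscr L_g^n\psi=h^{-1}\,\chi^{-n}\mathscr L_f^n(h\psi)$, we get $\mathscr L_g^n\psi\to\int\psi\,d\mu$ uniformly, hence $\mathscr L_g^{*n}m\to\mu$ weakly for every probability $m$; applied to an $\mathscr L_g^*$-fixed $m$ this yields $m=\mu$. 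Combined with the previous paragraph, $\mu$ is the unique equilibrium state for $f$.

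It remains to prove the Gibbs bounds. Fix $r$ smaller than both an injectivity radius $r_0$ of the covering map $T$ and the constant $\varrho_1$ from Definition~\ref{modulosepreimagens}. Then $T^n$ is injective on each Bowen ball $B_n(x,r)=\{y:d(T^jx,T^jy)<r,\ 0\le j\le n\}$, and since inverse branches of $T$ do not expand, the relevant branch of $T^{-n}$ extends over $B(T^nx,r)$, so $T^n(B_n(x,r))=B(T^nx,r)$. Iterating the conformality of $\nu$, $\nu(B(T^nx,r))=\chi^n\int_{B_n(x,r)}e^{-S_nf}\,d\nu$. The crucial input is bounded distortion: for $y\in B_n(x,r)$, local injectivity identifies $(T^jy)_{0\le j\le n}$ with the reversed pre-orbit furnished by the $T$-compatibility of $\Omega$ with respect to $\omega$ applied to the pair $(T^nx,T^ny)$, so $\sum_{j=0}^{n-1}\omega\big(d(T^jx,T^jy)\big)\le C_1^{-1}\Omega(r)$ and hence $|S_nf(y)-S_nf(x)|\le|f|_\omega C_1^{-1}\Omega(r)=:D_r$. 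Using also that $\nu$ has full support (from $\nu(A)=0\Rightarrow\nu(TA)=0$, which makes the open complement of $\operatorname{supp}\nu$ forward invariant and so empty by topological exactness), one has $\inf_z\nu(B(z,r))=:c_r>0$, and therefore $c_r e^{-D_r}\le\nu(B_n(x,r))\,\chi^n e^{-S_nf(x)}\le e^{D_r}$; multiplying by $h$, which satisfies $0<\min h\le h\le\max h$, and recalling $\chi=e^{P(T,f)}$, one obtains the asserted two-sided estimate with $K_r=\max\{(\max h)e^{D_r},\,((\min h)c_r e^{-D_r})^{-1}\}$. I expect the main difficulty to lie on the entropy side --- making Rokhlin's formula and the equality analysis fully rigorous despite the indifferent fixed point (the generating property of $\mathcal P$, the integrability of $g$ and $\log h$, and the identification of equilibrium states with $\mathscr L_g^*$-fixed measures) --- whereas the Gibbs estimate reduces, once $T$-compatibility is available, to careful bookkeeping.
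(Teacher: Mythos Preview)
Your proposal is correct and follows the same overall strategy as the paper: invariance of $\mu$ from the transfer-operator identity, the Jacobian $J_\mu(T)=\chi e^{-f}(h\circ T)/h$, Rokhlin's formula, the exclusion of $\delta_0$ via $f(0)<\log\chi$, a Jensen-type inequality yielding both $P(T,f)=\log\chi$ and the uniqueness of the equilibrium state, and bounded distortion from $T$-compatibility for the Gibbs bounds.

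The details differ only in a few places. For the generating property of the Markov partition the paper argues that $\operatorname{diam}\mathscr A_n(x)\to 0$ for \emph{every} $x$ outside a countable set, directly from topological exactness (if diameters failed to shrink, some iterate of a nonempty open subset would be all of $\mathbb T$ while remaining inside a single branch); your positive-frequency argument works for ergodic $m\neq\delta_0$ but needs an ergodic-decomposition step for general $m$. The paper writes the Jensen step through the operator $\mathscr J_m\phi(x)=\sum_{y\in T^{-1}(x)}\phi(y)/J_m(T)(y)$ and the elementary inequality $\sum_i p_ib_i-\sum_i p_i\log p_i\le\log\sum_i e^{b_i}$, which is exactly your conditional-expectation computation in different notation; the paper also runs the two pressure inequalities in the opposite order (first $\log\chi\le P(T,f)$ via open covers, then the Jensen bound on an equilibrium state). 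For the Gibbs estimate the paper works with $\mu$ and the Jacobian of $T^n$, reducing the lower bound to $\inf_y\mu(B(y,1,r))>0$ via exactness, whereas your route through the conformality of $\nu$ and $\operatorname{supp}\nu=\mathbb T$ is a clean alternative. Finally, your stated worry about integrability of $\log h$ is unnecessary: $h$ is continuous and strictly positive on the compact circle, so $\log h$ and $g$ are bounded continuous.
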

 
As one may expect, already known results can be seen as particular cases of the above theorems. By way of illustration, we recover as a corollary a 
result for the Manneville Pomeau family, studied, for instance, by \cite[Theorem~A]{Klo20} and \cite[Corollary~2.5]{LR14}, who considered for a potential 
 $f\in C_{\omega_{\alpha,0}}(\mathbb T)$ (H\"older modulus of continuity), the transfer operator $\mathscr L_f$ acting on $\mathscr C_{\omega_{\alpha-s},0}(\mathbb T)$.
  
\begin{corollary}\label{corA}
	For $s\in(0,1)$, consider the Manneville-Pomeau map $T_s(x)=x+x^{s+1}\mod 1$. Whenever $ 0 < s < \alpha < 1$ and $ \beta \ge 0$, for any potential  
	$f\in \mathscr C_{\omega_{\alpha,\beta}}(\mathbb T)$, the transfer operator $\mathscr L_f$ acting on $\mathscr C_{\omega_{\alpha-s,\beta}}(\mathbb T)$ 
	satisfies the Ruelle-Perron-Frobenius theorem. Furthermore, the invariant probability arising from the corresponding eigenfunction and eigenmeasure is the unique 
	Gibbs-equilibrium state associated with $ f $.
	\end{corollary}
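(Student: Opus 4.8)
The plan is to obtain Corollary~\ref{corA} as a direct application of Theorems~\ref{t:Theorem 1} and~\ref{t:Theorem 2}, the only thing to be checked being that the pair of moduli $\Omega = \omega_{\alpha-s,\beta}$ and $\omega = \omega_{\alpha,\beta}$ satisfies the $T_s$-compatibility hypothesis. First I would record the elementary facts placing the map in the right framework: writing $T_s(x) = x(1+V(x)) \bmod 1$ with $V(x) = x^s$, we have $V(1)=1$, a positive integer, and $\lim_{x\to0} V(tx)/V(x) = t^s$, so $T_s \in \mathscr F$ and $V$ is regularly varying with index $\sigma = s > 0$. The hypotheses $0 < s < \alpha < 1$ and $\beta \ge 0$ guarantee $0 \le \alpha - s < 1$, $0 < \alpha < 1$, together with $\alpha + \beta > 0$ and $(\alpha - s) + \beta > 0$, so that both $\omega_{\alpha,\beta}$ and $\omega_{\alpha-s,\beta}$ are bona fide members of $\mathcal M$ once the truncation parameters $x_0$ are chosen small enough to ensure concavity on all of $[0,+\infty)$.

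The core step is to verify the sufficient condition~\eqref{condicaosuficientecompatibilidade} for $\Omega = \omega_{\alpha-s,\beta}$ with respect to $\omega = \omega_{\alpha,\beta}$ under $T_s$. For $x$ small one computes $V(x)/\omega_{\alpha,\beta}(x) = x^{s-\alpha}(-\log x)^{\beta}$, while inequality~\eqref{desigualdadealfabeta}, applied to the modulus $\omega_{\alpha-s,\beta}$, gives
\[
\omega_{\alpha-s,\beta}\big((1+c)x\big) - \omega_{\alpha-s,\beta}(x) \;\ge\; \big((1+c)^{\alpha-s}-1\big)\, x^{\alpha-s}(-\log x)^{-\beta}.
\]
Multiplying these two expressions, the powers of $x$ and of $-\log x$ cancel exactly, and one obtains
\[
\frac{V(x)}{\omega_{\alpha,\beta}(x)}\Big(\omega_{\alpha-s,\beta}\big((1+c)x\big) - \omega_{\alpha-s,\beta}(x)\Big) \;\ge\; (1+c)^{\alpha-s}-1 \;>\; 0,
\]
a bound uniform in $x$, whence the $\liminf$ in~\eqref{condicaosuficientecompatibilidade} is strictly positive for every $c>0$. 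Proposition~\ref{compatibilidadesuficiente} then yields that $\omega_{\alpha-s,\beta}$ is $T_s$-compatible with respect to $\omega_{\alpha,\beta}$.

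Finally, since $f \in \mathscr C_{\omega_{\alpha,\beta}}(\mathbb T)$, Theorem~\ref{t:Theorem 1} applies with $T = T_s$, $\omega = \omega_{\alpha,\beta}$ and $\Omega = \omega_{\alpha-s,\beta}$, producing the eigenmeasure $\nu$, the maximal simple eigenvalue $\chi$, the strictly positive eigenfunction $h \in \mathscr C_{\omega_{\alpha-s,\beta}}(\mathbb T)$ and the uniform convergence of $\{\chi^{-n}\mathscr L_f^n\phi\}$ to $h\int\phi\,d\nu$ — that is, the full Ruelle-Perron-Frobenius statement on $\mathscr C_{\omega_{\alpha-s,\beta}}(\mathbb T)$. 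Theorem~\ref{t:Theorem 2} then gives that $\mu = h\nu$ is the unique equilibrium state associated with $f$ and satisfies the Gibbs bounds. I do not expect a genuine obstacle here: the argument is a matter of invoking the general theorems, and the only computation requiring any care is the asymptotic cancellation above, which is routine given~\eqref{desigualdadealfabeta}; a secondary point meriting a line of verification is the concavity of the truncated moduli, handled by shrinking $x_0$.
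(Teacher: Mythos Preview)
Your proposal is correct and follows essentially the same route as the paper: the paper does not give a separate formal proof of Corollary~\ref{corA}, but in the discussion surrounding~\eqref{desigualdadealfabeta} it notes exactly your computation, namely that~\eqref{desigualdadealfabeta} applied to $\omega_{\alpha-s,\beta}$ makes condition~\eqref{condicaosuficientecompatibilidade} immediate for $V(x)=x^s$, whence Proposition~\ref{compatibilidadesuficiente} yields $T_s$-compatibility and Theorems~\ref{t:Theorem 1} and~\ref{t:Theorem 2} apply. Your write-up simply makes these steps explicit.
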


Nor is it surprising that, due to the malleability of the framework considered, it is not difficult to present families of new examples. As far as we know, the result below is not registered in the literature. The reader may produce several others from condition~\eqref{condicaosuficientecompatibilidade}.

\begin{corollary}\label{corB}
Given a positive integer $ k $, let $ S_k(x) = x (1 + W_k(x))  \mod 1 $ be an element of $ \mathscr F $ 
for which $ W_k(x) = A_k (\log^k 1/x)^{-1} $, with $ A_k > 0 $, for any $ x $ small enough.  
Let $ f:\mathbb T \to \mathbb R  $ be a continuous potential with a positive multiple of $ (\log^k 1/x)^{-1} (\log 1/x)^{-1} (\log^2 1/x)^{-2} $ as modulus.
Then a Ruelle-Perron-Frobenius theorem holds when one considers the action of the associated transfer operator $\mathscr L_f$ 
on the linear space of the continuous functions that admit a positive multiple of $ (\log^2 1/x)^{-1} $ as a modulus of continuity.
In addtion, existence and uniqueness of Gibbs-equilibrium state associated with $ f $ are also guaranteed.
\end{corollary}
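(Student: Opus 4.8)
The plan is to obtain Corollary~\ref{corB} as a direct application of Theorems~\ref{t:Theorem 1} and~\ref{t:Theorem 2}: once the stated moduli are put in the framework of those theorems and the required $S_k$-compatibility is checked via the sufficient condition~\eqref{condicaosuficientecompatibilidade} and Proposition~\ref{compatibilidadesuficiente}, there is nothing else to do. First I would fix a small $x_0>0$ and let $\omega$ and $\Omega$ be the functions equal, on $(0,x_0)$, to $(\log^k 1/x)^{-1}(\log 1/x)^{-1}(\log^2 1/x)^{-2}$ and $(\log^2 1/x)^{-1}$ respectively, vanishing at $0$, and constant on $[x_0,+\infty)$. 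A routine inspection of first and second derivatives shows that, for $x_0$ small enough, both functions are nondecreasing and concave, so $\omega,\Omega\in\mathcal{M}$. Since $f$ admits a positive multiple of the first function as modulus of continuity at small scales, enlarging the constant to control larger distances by uniform continuity on $\mathbb{T}$ gives $f\in\mathscr{C}_\omega(\mathbb{T})$; and by construction $\mathscr{C}_\Omega(\mathbb{T})$ is precisely the linear space of continuous functions admitting a positive multiple of $(\log^2 1/x)^{-1}$ as modulus.

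The core step is the verification of~\eqref{condicaosuficientecompatibilidade} for $T=S_k$, where $V=W_k$ equals $A_k(\log^k 1/x)^{-1}$ near the origin. Writing $u=\log 1/x$, one has near $0$ that $V(x)/\omega(x)=A_k\,u\,(\log u)^2$, while for small $c>0$
$$ \Omega\big((1+c)x\big) - \Omega(x) = \frac{1}{\log\!\big(u-\log(1+c)\big)} - \frac{1}{\log u} = \frac{\log\!\big(u/(u-\log(1+c))\big)}{\log u\cdot\log\!\big(u-\log(1+c)\big)}. $$
Multiplying, the two factors $(\log u)^2$ cancel asymptotically, and the calculus exercise recalled before the statement (namely $\lim_{x\to 0}\log 1/x\,\log(\log 1/x\,/\,\log 1/(1+c)x)=\log(1+c)$) yields
$$ \liminf_{x\to 0}\frac{V(x)}{\omega(x)}\Big(\Omega\big((1+c)x\big) - \Omega(x)\Big) = A_k\log(1+c) > 0 $$
for every sufficiently small $c$. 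Hence~\eqref{condicaosuficientecompatibilidade} holds, and Proposition~\ref{compatibilidadesuficiente} gives that $\Omega$ is $S_k$-compatible with respect to $\omega$; here choosing $\varrho_1<x_0$ in Definition~\ref{modulosepreimagens} makes the truncation at $x_0$ irrelevant, since $S_k$-compatibility only concerns backward orbits accumulating at the indifferent fixed point, where solely the behaviour of the moduli near $0$ matters.

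It then remains to invoke the main theorems. With $S_k\in\mathscr{F}$ given, $\omega,\Omega\in\mathcal{M}$, $\Omega$ being $S_k$-compatible with respect to $\omega$, and $f\in\mathscr{C}_\omega(\mathbb{T})$, Theorem~\ref{t:Theorem 1} applies verbatim and produces the eigenmeasure $\nu$, the simple maximal eigenvalue $\chi$, the positive eigenfunction $h\in\mathscr{C}_\Omega(\mathbb{T})$ and the uniform convergence of $\chi^{-n}\mathscr{L}_f^n\phi$ to $h\int\phi\,d\nu$ --- that is, exactly the Ruelle-Perron-Frobenius theorem for $\mathscr{L}_f$ acting on the stated space. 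Theorem~\ref{t:Theorem 2} then yields that $\mu=h\nu$ is the unique equilibrium state for $f$ (with $\log\chi=P(S_k,f)$) and that it is a Gibbs measure. I expect the only mildly delicate points to be establishing concavity of the truncated moduli near $0$ and confirming that the truncation does not affect compatibility; both are routine, and since the asymptotics reduce to the quoted exercise, no new idea is required --- the content of the corollary is entirely carried by Theorems~\ref{t:Theorem 1} and~\ref{t:Theorem 2} together with condition~\eqref{condicaosuficientecompatibilidade}.
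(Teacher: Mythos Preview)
Your proposal is correct and follows exactly the route the paper takes: it verifies condition~\eqref{condicaosuficientecompatibilidade} for $V=W_k$, $\omega$ and $\Omega$ via the same calculus exercise quoted in the introduction (yielding the limit $A_k\log(1+c)>0$), invokes Proposition~\ref{compatibilidadesuficiente} to obtain $S_k$-compatibility, and then applies Theorems~\ref{t:Theorem 1} and~\ref{t:Theorem 2}. One small wording point: the truncation at $x_0$ is irrelevant not because backward orbits accumulate at the indifferent fixed point, but simply because Definition~\ref{modulosepreimagens} only evaluates $\omega$ and $\Omega$ at distances $d(x_j,y_j)<\varrho_1<x_0$, so the truncated and untruncated moduli coincide on the relevant range.
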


In section~\ref{Suficiente} we show that condition~\eqref{condicaosuficientecompatibilidade} 
is sufficient to guarantee $T$-compatibility and we provide examples of associated moduli of continuity 
that satisfy this condition. Sections~\ref{primeiroteorema} and~\ref{segundoteorema} are devoted to the proofs of the theorems above. 
For both results, the proof strategies fall on argumentative lines already present in the literature.
Among key references are \cite{Bal00, Bow75,   PP90, VO16, Rue04}.
A major contribution here is the identification of the linear space $ \mathscr C_\Omega(\mathbb T) $ as an appropriate set for the search of eigenfunctions of the transfer operator 
$ \mathscr L_f $ when $ f \in \mathscr C_\omega(\mathbb T) $.

\section{A sufficient condition for $T$-compatibility}\label{Suficiente}

By its very definition, it follows that $ T \in \mathscr F $ is expanding outside any half-closed arc that does not contain the origin, or without loss of generality 
outside any subset of the form $ [0, \epsilon) $, $0 < \epsilon < 1$.
Indeed, as $ T $ has exactly $ N_V := 1 + V(1) $ inverse branches, let $ \varrho_V \in (0, 1/2) $ be such that $ | x - y | < \varrho_V $ implies
$ |x - y| N_V + |V(x) - V(y)| < 1/2 $. It is thus easy to show that, for all $x,y\in [\epsilon, 1)$ with $ d(x, y) < \varrho_V $, 
          \begin{equation*}
          d(T(x),T(y))\ge \lambda(\epsilon)\, d(x,y),
          \end{equation*}
where  $\lambda(\epsilon) := 1 + V(\epsilon) \to 1$ as $\epsilon \to 0.$ 
A quantitative version of its non-uniformly expanding property on the whole circle is provided by the following lemma.
This result is analogous to Lemma~4 of \cite{GI20} and its proof is included by convenience of the reader. 

\begin{lemma}\label{desigualdade distancia}
	There exists a constant $\varrho_0 > 0$ such that for  $x,y\in \mathbb T$ with $d(x,y)<\varrho_0$, 
	\begin{equation*}
	d(T(x),T(y))\ge d(x,y) \, \Big(1+ \frac{1}{2^{\sigma+2}} V(d(x,y))\Big).
	\end{equation*}
\end{lemma}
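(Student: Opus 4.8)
The plan is to reduce the estimate to the expanding (non-indifferent) behaviour already established away from the origin together with a direct computation near the origin, exploiting that $V$ is (regularly or slowly) varying with index $\sigma$. Write $d = d(x,y)$ and recall $T(x) = x(1+V(x)) \bmod 1$. First I would choose $\varrho_0 \le \varrho_V$ small enough that, for $d(x,y) < \varrho_0$, the two points $x,y$ determine a genuine arc of length $d$ on which no wraparound ambiguity occurs, so that $d(T(x),T(y)) = |x(1+V(x)) - y(1+V(y))|$ up to the mod-$1$ identification. Without loss of generality take $0 \le y < x < 1$ with $x - y = d$ (the case straddling $0$ is symmetric after relabelling, using $V(0)=0$). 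Then
$$ d(T(x),T(y)) = \big| (x-y) + xV(x) - yV(y) \big| = d + \big( xV(x) - yV(y) \big), $$
and since $V$ is increasing and $x > y \ge 0$, the bracket is nonnegative; in fact $xV(x) - yV(y) \ge (x-y)V(y) = d\,V(y)$ trivially, but this is not yet enough because $V(y)$ can be much smaller than $V(d)$ when $y$ is close to $0$.

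The key step is to split according to the position of $y$. If $y \ge d$, then $V(y) \ge V(d)$ and we immediately get $d(T(x),T(y)) \ge d(1 + V(d)) \ge d\big(1 + 2^{-(\sigma+2)}V(d)\big)$, which is stronger than required. If instead $0 \le y < d$, then $x = y + d < 2d$, so $x \in [d, 2d)$, and I would estimate $xV(x) - yV(y) \ge xV(x) - d\,V(d) \ge dV(d) - dV(d)$... which is vacuous, so one must be more careful: write instead $xV(x) - yV(y) = x\big(V(x)-V(y)\big) + (x-y)V(y) \ge x\big(V(x)-V(y)\big)$. Hmm — the cleanest route is actually to observe that the worst case is $y$ near $0$, where $xV(x) - yV(y)$ is close to $xV(x)$ with $x \in [d,2d)$, so $xV(x) - yV(y) \ge$ (something like) $\tfrac{1}{2} d\, V(d)$ after absorbing the two-sided comparison $V(d) \le V(x) \le V(2d)$ and the regular-variation bound $V(2d) \le C 2^\sigma V(d)$ for $d$ small. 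Making the constant explicit is where the exponent $2^{\sigma+2}$ comes from: one needs $xV(x) - yV(y) \ge 2^{-(\sigma+2)} d\, V(d)$ uniformly for $0 \le y < d \le x < 2d$, and this follows by bounding $yV(y) \le d\,V(d)$ against $xV(x) \ge d\,V(d)$ with enough slack, the slack being controlled by how much $V(x) \ge V(d)$ exceeds $V(d)$ — precisely, by splitting the interval $[d,2d)$ at its midpoint and using monotonicity plus $\lim_{t\to 0} V(2t)/V(t) = 2^\sigma$ to choose $\varrho_0$ so that $V(3d/2) \ge \tfrac{1}{2} V(2d)$ etc. I would carry out this elementary case analysis, picking $\varrho_0$ at the end small enough that all the regular-variation limits are within the needed tolerance.

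The main obstacle I anticipate is purely bookkeeping: getting the explicit constant $2^{-(\sigma+2)}$ to come out, rather than some unspecified small constant, which forces a slightly delicate choice of where to subdivide the interval $[d,2d)$ and of how small $\varrho_0$ must be taken so that $V(tx)/V(x)$ is uniformly close to $t^\sigma$ on the relevant range of $t \in [1,2]$. There is no conceptual difficulty — the statement is essentially ``$T$ expands by at least $1 + cV(d)$ near distance $d$'' — but one should also double-check the wraparound case: if $x$ is near $1$ and $y$ near $0$ with $d(x,y) = 1 - x + y$ small, then the relevant comparison is between $T(x) = x(1+V(x)) - V(1)$ and $T(y) = y(1+V(y))$, and since $V(1)$ is a positive integer this reduces, after subtracting the integer, to the same one-sided computation with roles of the endpoints reversed and $V$ near its value $V(1) \ge 1$, which is the easy (genuinely expanding) regime already covered by the remark preceding the lemma. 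So the final $\varrho_0$ is the minimum of $\varrho_V$ and the threshold produced by the near-origin analysis.
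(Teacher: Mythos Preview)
Your non-wraparound case is overcomplicated and ultimately incomplete because you chose the wrong side of the decomposition. When $0 \le y < x$ with $x-y=d$, write
\[
xV(x) - yV(y) \;=\; (x-y)V(x) + y\big(V(x)-V(y)\big) \;\ge\; d\,V(x) \;\ge\; d\,V(d),
\]
using $x \ge x-y = d$ and monotonicity of $V$. This gives $d(T(x),T(y)) \ge d(1+V(d))$ outright, with no case split on whether $y \ge d$ or $y < d$; your struggle in the sub-case $y<d$ stems entirely from having factored out $V(y)$ instead of $V(x)$.

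The genuine gap is the wraparound case, which you dismiss as the ``easy genuinely expanding regime covered by the remark preceding the lemma.'' That remark requires \emph{both} points to lie in $[\epsilon,1)$, but here one point is near~$0$. Concretely, if $x = 1-a$ and $y = b$ with $a+b=d$, one gets $d(T(x),T(y)) \ge d + aV(1-a) + bV(b)$; when $b > a$ (so the point near the origin dominates), the ``expanding'' contribution $aV(1-a)$ can be made negligible by taking $a \ll d$, and you must instead use $bV(b) \ge \tfrac{d}{2}\,V(\tfrac{d}{2})$ together with regular variation to obtain $V(\tfrac{d}{2}) \ge 2^{-(\sigma+1)}V(d)$ for $d$ small. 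This is exactly where the constant $2^{-(\sigma+2)}$ originates --- not from any subdivision of $[d,2d)$ --- and your proposal does not supply it. The paper handles this cleanly by splitting the straddling arc at $0$, applying the (already sharp) non-wraparound bound to each half, and noting $\max\{d(x,0),d(y,0)\} \ge d/2$.
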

\begin{proof}
 Let $x,y \in \mathbb T$ be such that $d(x,y)<\varrho_V$, where $ \varrho_V $ is defined as above. 
	We consider two situations.
	
	\smallskip
	\noindent\emph{Either the (smallest) open arc from $ y $ to $ x $ does not contain the origin.} We may suppose then $ 0 \le y \le x \le 1 $.
	Hence $ x(1+V(x)) - y(1+V(y)) \le |x - y| N_V + |V(x) - V(y)| < 1/2 $ and the fact that $ V $ is increasing imply that
	\begin{equation*}
	d(T(x),T(y)) = (x-y)\big(1+V(x)\big) + y\big(V(x) - V(y)\big) \ge d(x,y)\,\big(1+ V(d(x,y))\big).
	\end{equation*}
	
	\smallskip
	\noindent\emph{Or the open arc from $ y $ to $ x $ contains the origin.} By the previous case, we have
	\begin{equation*}
		d(T(x),0) \ge  d(x,0) \, \big(1+ V(d(x,0))\big) \quad \text{and} \quad d(0,T(y)) \ge  d(0,y) \, \big(1+ V(d(y,0))\big),
	\end{equation*}
	which by adding yields
	\begin{equation*}
		d(T(x),T(y)) \ge  d(x,y) + d(x,0) \, V(d(x,0)) + d(y,0) \, V(d(0,y)).
	\end{equation*}
	However, as $ V $ is increasing,
	\begin{align*}
	d(x,0) \, V(d(x,0)) + & d(y,0) \, V(d(0,y)) \ge \\
	& \ge \max\{d(x,0),d(y,0)\} \, V\big(\max\{d(x,0),d(y,0)\}\big) \\
	& \ge \frac{1}{2} d(x,y) \, V\Big(\frac{1}{2} d(x,y)\Big).
	\end{align*}
	Using the fact that $ V $ has a varying property, let $ \varrho_0 \in (0, \varrho_V) $ be such that 
	$ V \big(\frac{1}{2} \gamma \big) \ge \frac{1}{2^{\sigma+1}} V(\gamma) $ for $ 0 \le \gamma < \varrho_0 $. 
	We have thus shown that, whenever $ d(x,y) < \varrho_0 $,
	\begin{equation*}
	d(T(x),T(y))\ge d(x,y) \, \Big(1+ \frac{1}{2^{\sigma+2}} V(d(x,y))\Big).
	\end{equation*}
\end{proof}

The next result allows to check $T$-compatibility in concrete examples.

         \begin{proposition}\label{compatibilidadesuficiente}
         For a map $ T(x) = x(1+V(x)) \mod 1$  in $ \mathscr F $, suppose that the moduli of continuity $ \omega, \Omega \in \mathcal M $ fulfill
        $$
 \liminf_{x\to0} \frac{V(x)}{\omega(x)} \Big ( \Omega \big( (1+c) x \big) - \Omega (x) \Big) > 0 
 $$
for $ c > 0 $ small enough.  Then $ \Omega $ is $T$-compatible with respect to $\omega$.
          \end{proposition}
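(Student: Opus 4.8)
The plan is to establish the two estimates of Definition~\ref{modulosepreimagens} by following a backward orbit one step at a time: Lemma~\ref{desigualdade distancia} will control how distances shrink along inverse branches, while the concavity of $\Omega$ combined with the hypothesis will control the sums of $\omega$.

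The first thing I would isolate is a one‑variable inequality, which is the crux. Since the liminf in the hypothesis is positive, there are constants $\kappa>0$ and $x_*>0$ with $\Omega\big((1+c)x\big)-\Omega(x)\ge \kappa\,\omega(x)/V(x)$ for $0<x<x_*$. Given $x>0$ small, write $t=t(x):=2^{-(\sigma+2)}V(x)$, so $t<c$ once $x$ is small enough. Since $\Omega$ is concave, its difference quotients based at $x$ are non‑increasing, hence
$$
\Omega\big((1+t)x\big)-\Omega(x)\ \ge\ \frac{t}{c}\,\Big(\Omega\big((1+c)x\big)-\Omega(x)\Big).
$$
Multiplying the two displayed inequalities, and noting that the $V(x)$ factors cancel because $t/V(x)=2^{-(\sigma+2)}$, I obtain, for all sufficiently small $x>0$,
$$
\Omega\Big(x\big(1+2^{-(\sigma+2)}V(x)\big)\Big)-\Omega(x)\ \ge\ C_1\,\omega(x),\qquad C_1:=\frac{\kappa}{2^{\sigma+2}\,c}.
$$
This is precisely the mechanism condition~\eqref{condicaosuficientecompatibilidade} is designed to trigger: it converts the \emph{variable} expansion rate $1+2^{-(\sigma+2)}V(x)$ produced by the dynamics near the indifferent fixed point into the \emph{fixed} rate $1+c$ appearing in the hypothesis.

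Next I would fix $\varrho_1>0$ small enough that: $\varrho_1\le\varrho_0$ (so Lemma~\ref{desigualdade distancia} applies to pairs at distance $<\varrho_1$); $\varrho_1\le x_*$ and $V(\varrho_1)<2^{\sigma+2}c$ (so the one‑variable inequality above holds at every scale $<\varrho_1$); and $\varrho_1\le\delta_T$, where $\delta_T>0$ is a uniform radius such that, for every $z\in\mathbb T$, $T$ is injective on $B(z,\delta_T)$ and $T\big(B(z,\delta_T)\big)\supseteq B(T(z),\delta_T)$ --- such a $\delta_T$ exists because $T$ is a local homeomorphism of the compact circle which, by Lemma~\ref{desigualdade distancia}, does not contract at small scales. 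Given a backward orbit $\{x_k\}_{k\ge0}$ and $y_0$ with $d(x_0,y_0)<\varrho_1$, I define $y_{k+1}$ inductively by lifting the shortest arc $\gamma_k$ from $x_k$ to $y_k$ through the local inverse of $T$ at $x_{k+1}$ (well defined along $\gamma_k$ since $\gamma_k\subset B(x_k,\varrho_1)\subset B(x_k,\delta_T)$); its endpoint $y_{k+1}$ satisfies $T(y_{k+1})=y_k$. Subdividing the lifted arc finely and applying Lemma~\ref{desigualdade distancia} to each small sub‑step (which $T$ does not contract, hence the inverse branch does not expand), I get that the lift is no longer than $\gamma_k$; in particular $d(x_{k+1},y_{k+1})\le d(x_k,y_k)\le d(x_0,y_0)<\varrho_1$, and an induction yields the first inequality of Definition~\ref{modulosepreimagens}. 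Knowing now $d(x_{k+1},y_{k+1})<\varrho_0$, Lemma~\ref{desigualdade distancia} improves this to $d(x_k,y_k)\ge d(x_{k+1},y_{k+1})\big(1+2^{-(\sigma+2)}V(d(x_{k+1},y_{k+1}))\big)$.

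Finally, for the second inequality I would combine monotonicity of $\Omega$, this last bound, and the one‑variable inequality to get $\Omega(d(x_k,y_k))-\Omega(d(x_{k+1},y_{k+1}))\ge C_1\,\omega(d(x_{k+1},y_{k+1}))$ for every $k\ge0$; summing this telescoping inequality over the first $k$ steps gives exactly $\Omega(d(x_k,y_k))+C_1\sum_{j=1}^k\omega(d(x_j,y_j))\le\Omega(d(x_0,y_0))$. Uniqueness of the pre‑orbit $\{y_k\}$ subject to $d(x_k,y_k)<\varrho_1$ follows because $T$ is injective on $B(x_k,\varrho_1)$, so $y_{k-1}$ has at most one preimage there; and running the same construction with the roles of $x_0$ and $y_0$ interchanged --- legitimate since $d(y_0,x_0)=d(x_0,y_0)<\varrho_1$ --- produces a two‑sided inverse of the map $\{x_k\}\mapsto\{y_k\}$, which is the asserted one‑to‑one correspondence between pre‑orbits. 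The one genuinely delicate step is the one‑variable estimate, i.e.\ the reconciliation via concavity of the dynamically produced expansion rate with the fixed rate of~\eqref{condicaosuficientecompatibilidade}; the rest is bookkeeping, the only mild technicality being that $\gamma_k$ may straddle the fixed point, which causes no trouble since $T$ is an orientation‑preserving covering map near $0$.
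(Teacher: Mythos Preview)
Your proof is correct and follows essentially the same route as the paper's: both combine Lemma~\ref{desigualdade distancia} with the concavity of $\Omega$ to obtain the one-step inequality $\Omega(d(x_0,y_0))-\Omega(d(x_1,y_1))\ge C_1\,\omega(d(x_1,y_1))$, from which the full statement follows by telescoping. The only cosmetic differences are that the paper writes $\gamma(1+cV(\gamma))$ as the convex combination $(1-V(\gamma))\gamma+V(\gamma)(1+c)\gamma$ (taking $c\le 2^{-(\sigma+2)}$ and $V(\gamma)\le 1$) where you use monotonicity of difference quotients, and that you are more explicit than the paper about constructing the nearby pre-image $y_1$ via arc-lifting and about the bijectivity of the pre-orbit correspondence.
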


\begin{proof} 
  We shall show that there are $\varrho_1 \in (0, \varrho_0) $ and $C_1>0$ such that, for any $ x_1, x_0 $ in $\mathbb T$ with $T(x_1)=x_0$ and all
  $y_0\in \mathbb T $ with $d(x_0,y_0)< \varrho_1$, one has a unique $y_1\in \mathbb T $, with $T(y_1)=y_0$ and $d(x_1,y_1) \le d(x_0,y_0) < \varrho_1$, satisfying
	\begin{equation}\label{modulos apos uma iterada}
	\Omega\big(d(x_1,y_1)\big)+ C_1\, \omega\big(d(x_1,y_1)\big) \le \Omega\big(d(x_0,y_0)\big).
	\end{equation}
	Moreover, we shall argue that the correspondence $ x_1 \mapsto y_1 $ is injective.
	
	For a fixed $ c \in (0, \frac{1}{2^{\sigma + 2}}] $ such that the above limit inferior is positive, define 
	$ C_1 := \frac{1}{2} \liminf \frac{V(x)}{\omega(x)} \big ( \Omega ( (1+c) x ) - \Omega (x) \big) $. 
	Let $ \varrho_1 \le \varrho_0/2 $ be such that $ V(x) \in [0,1] $ and $ V(x)\big ( \Omega ( (1+c) x ) - \Omega (x) \big) \ge C_1 \omega(x) $ whenever $ 0 < x < \varrho_1 $, 
	where $\varrho_0$ is as in the statement of Lemma~\ref{desigualdade distancia}.
	For $x_0, x_1,  y_0 \in  \mathbb T $  with  $T(x_1)=x_0$ and  $d(x_0,y_0)<\varrho_1$, we can choose  $y_1\in T^{-1}(y_0)$ with
	$d(x_1,y_1) \le d(x_0,y_0) < \varrho_1$. 
	Then from Lemma~\ref{desigualdade distancia},
	\begin{equation*}
	d(x_0,y_0)=d(T(x_1), T(y_1))\ge d(x_1,y_1)\, \big(1+ c\, V(d(x_1,y_1))\big).
	\end{equation*}
	Since $\Omega$ is non-decreasing, we have
	$
	\Omega\big(d(x_0,y_0)\big) \ge \Omega\big(d(x_1,y_1)\,\big (1+ c\, V(d(x_1,y_1)\big)\big).
	$  	
	For $\gamma =d(x_1,y_1)$, we can write $ \gamma (1+c\, V(\gamma)) = (1-V(\gamma))\,\gamma+V(\gamma)\,(1+c)\, \gamma $.
	As  $\Omega $ is concave, we see that
	\begin{align*}
	\Omega\big(\gamma\,(1+c\, V(\gamma))\big)&\ge
	(1-V(\gamma))\,\Omega(\gamma)+V(\gamma)\,\Omega\big((1+c)\,\gamma\big)\\
	& =
	\Omega(\gamma)+ V(\gamma)\,\Big (\Omega((1+c)\,\gamma)-\Omega(\gamma)\Big) \\
	& \ge
	\Omega(\gamma)+ C_1 \omega(\gamma).
	\end{align*}
	Thus, we have shown that, for $x_0,x_1$, $y_0\in [0,1)$ with $T(x_1)=x_0$ and $d(x_0,y_0)<\varrho_1$, there is $y_1\in T^{-1}(y_0)$ 
	with $d(x_1,y_1) < \varrho_1$ for which inequality~\eqref{modulos apos uma iterada} holds. It remains to argue that $ x_1 \mapsto y_1 $ is a well-defined one-to-one map. 
	But Lemma~\ref{desigualdade distancia} implies that two pre-images $ \bar z_1 \neq z_1 $ of a point $ z_0 $ must satisfy $ d(\bar z_1, z_1) \ge \varrho_0 $. 
	Hence, we have $ \min \{ d(x_1, \bar y_1), d(\bar x_1, y_1) \} \ge \varrho_0 - d(x_1, y_1) > \varrho_1 $ whenever $ \bar x_1 \neq x_1 $ and $ \bar y_1 \neq y_1 $ are
	pre-images, respectively, of $ x_0 $ and $ y_0 $, which completes the proof of the lemma.
\end{proof}

\subsubsection*{Associated moduli fulfilling condition~\eqref{condicaosuficientecompatibilidade}}

Condition~\eqref{condicaosuficientecompatibilidade} is so flexible that it is not surprising there may be different moduli $T$-compatible with a given modulus $\omega$.
(Actually, inequality~\eqref{desigualdadealfabeta} allows  to show, via condition~\eqref{condicaosuficientecompatibilidade}, that for any $ \bar \beta \in [0, \beta] $ the modulus
$ \omega_{\alpha-s, \bar \beta} $ is $T_s$-compatible with $\omega_{\alpha, \beta} $.)
The question of whether there is a possible, say, canonical choice naturally arises.
Far from providing an answer to this interesting question, we would like to describe how a specific modulus of continuity $ \Omega $  can be determined in certain situations.
This construction is based on a case considered in \cite{GI20}.

For a map $T$ in $\mathscr F$ described as $T(x)=x(1+V(x))\mod 1$, suppose there exists $\omega \in \mathcal M$ for which 
there are constants  $\xi_0>1$ and  $\eta_0\in (0,1)$ and a function $ c: (1, \xi_0] \to (1, +\infty) $ such that
        	\begin{equation}\label{desigualdadeH}        	
	\frac{\omega(\xi x)}{V(\xi x)}\ge c(\xi) \frac{\omega(x)}{V(x)}, \qquad \forall\, x \in (0,\eta_0), \, \forall \, \xi\in (1,\xi_0].
        	\end{equation}

Note, for instance, that for the Manneville-Pomeau map $  T_s $ (for which $ V(x) = x^s $), and the modulus of continuity $\omega_{\alpha,\beta} $, 
the above hypothesis follows immediately with $ c(\xi) = \xi^{\alpha -s} $: for $\gamma$ sufficiently small,
\begin{equation*}
\frac{\omega_{\alpha,\beta}(\xi x)}{(\xi x)^s}
\ge
\xi^{\alpha-s} \frac{x^\alpha(-\log x)^{-\beta}}{x^s}= \xi^{\alpha-s} \frac{\omega_{\alpha,\beta}(x)}{x^s}.
\end{equation*}
	
We can define a new modulus of continuity $\Omega$ in $\mathcal M$ by means of~\eqref{desigualdadeH}. 
In order to do that, fix a parameter $ \tau > 0 $. First, let
 $\vartheta_0:[0,\infty)\to [0,\infty)$ be the continuous function defined as
\begin{equation*}
\vartheta_0(x):=\left\{
\begin{array}{ll}
\frac{\omega(x)}{V(x)},& x>0,  \\
0,& x= 0,
\end{array}
\right.
\end{equation*}
and let $ \vartheta_1:[0,\infty)\to [0,\infty) $ be  the continuous non-decreasing function given as
\begin{equation*} 
\vartheta_1(x)=\left\{
\begin{array}{ll}
	\displaystyle \max_{0\le y\le x} \vartheta_0(y),& 0\le x\le \tau,\\
	 \displaystyle\max_{[0,\tau]}\vartheta_0, & x\ge \tau,
	\end{array}
	\right.
\end{equation*}
Denote then $\vartheta_1^*$  the \emph{concave conjugate Legendre transform} of $\vartheta_1$: 
\begin{equation*}
\vartheta_1^*(x)= \min_{y\in[0,\infty)}[xy-\vartheta_1(y)], \quad \forall\, x\ge 0.
\end{equation*}
It is easy to see that $\vartheta_1^*$ is concave, non-decreasing and continuous on $[0,\infty)$.
Moreover, since for any $ x > 0 $ and $ \epsilon > 0 $, $ \vartheta_1^* (x) \le \epsilon - \vartheta_1(\epsilon/x) \le \epsilon $, we conclude that $ \vartheta_1^* $ is bounded and non-positive.
Its  \emph{concave conjugate Legendre transform},
\begin{equation*}
\vartheta_1^{**}(x)= \min_{y\in [0,\infty)}[xy-\vartheta_1^*(y)], \quad \forall\,  x\ge 0,
\end{equation*}
is also a continuous concave non-decreasing function. Moreover
$\vartheta_0(x)\le \vartheta_1(x)\le \vartheta_1^{**}(x)$ for all $x\in [0,\tau]$.
Actually, $\vartheta_1^{**}$ is the \emph{smallest} concave function that lies above $\vartheta_1$ on~$[0,\tau].$
Note that $\vartheta_1^{**}(0) = - \max \vartheta_1^*.$
We have thus obtained a function $ \Omega:= \vartheta_1^{**} + \max \vartheta_1^* $ that belongs to $\mathcal M$.

As an illustration, note that for the Manneville-Pomeau map $ T_s $ and the modulus of continuity $ \omega_{\alpha, \beta} $, 
whenever $ 0 < s < \alpha< 1 $ and $ \beta \ge 0 $, we have $ \vartheta_0 = \vartheta_1 = \vartheta_1^{**} = \Omega = \omega_{\alpha-s,\beta} $ on $ [0, \tau] $
if we take $ \tau = \min\big\{x_0(\alpha,\beta), x_0(\alpha-s, \beta)\big\} $.

Perhaps it is (at least conceptually) meaningful to note that, when $ \tau $ is taken small enough, the modulus $ \Omega $ could be introduced,
up to some truncation, as the concave hull of function $ \frac{\omega}{V} $. Indeed, inequality~\eqref{desigualdadeH} implies that  $ \frac{\omega}{V} $ is increasing 
on $ (0, \eta_0) $, and therefore the step $ \vartheta_1 $ is unnecessary for $ \tau < \eta_0 $. In this case, instead of the double Legendre transform, one
could prefer to define $ \Omega $ using the infimum of all affine functions bounding $  \frac{\omega}{V} $ from above. 

To see that the modulus of continuity $ \Omega $ so obtained is $T$-compatible with respect to the initial modulus $ \omega $,
the reader can easily adapt the arguments from the proof of Proposition 3 of \cite{GI20}, whose core strategy is exactly to show that, for this constructed modulus $ \Omega $ 
and the pair $ \omega $ and $ V $ fulfilling~\eqref{desigualdadeH},  one always has the condition~\eqref{condicaosuficientecompatibilidade} checked.
Even though an assumption like~\eqref{desigualdadeH} may be interpreted as somewhat restrictive, its value here is in pointing out that there may be more appropriate 
or convenient  choices of $T$-compatible moduli.

	\section{Proof of Theorem~\ref{t:Theorem 1}}\label{primeiroteorema}

	 The statements of Theorem~\ref{t:Theorem 1} are obtained from Proposition~\ref{autoequacoes}, Proposition~\ref{final teorema inicial} and Proposition~\ref{convergence of iterates}. We recall that $T:\mathbb T\to \mathbb T$ is a map in $\mathscr F$ such that $T(x)=x(1+V(x))\mod 1$,   $\omega $ and $ \Omega $ are moduli of continuity in $\mathcal M$ such that $ \Omega $ is $T$-compatible with respect to $\omega$ (recall Definition~\ref{modulosepreimagens}), and $ f $ is a potential that 
belongs to $ \mathscr C_\omega (\mathbb T) $.
 
\subsubsection*{Eigenproperties}

       \begin{proposition}\label{autoequacoes}
       The transfer operator $ \mathscr L_f $ and its dual share a common positive eigenvalue $ \chi $. 
       This number is a simple eigenvalue for $ \mathscr L_f $, associated with which there is a positive eigenfunction $ h $ belonging to $ \mathscr C_\Omega (\mathbb T) $.
       \end{proposition}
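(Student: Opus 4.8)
My strategy is the classical Ruelle-Perron-Frobenius bootstrapping, carried out in the space $\mathscr{C}_\Omega(\mathbb{T})$ identified via $T$-compatibility. First I would obtain the eigenmeasure $\nu$ for the dual operator. Since $T$ is topologically exact, $\mathscr{L}_f$ maps strictly positive functions to strictly positive functions, and $\mathscr{L}_f 1$ is bounded below; hence the map $m \mapsto \mathscr{L}_f^* m / (\mathscr{L}_f^* m)(\mathbb{T})$ is a continuous self-map of the convex compact set $\text{Prob}(\mathbb{T})$ (weak-$*$ topology). By the Schauder-Tychonoff fixed point theorem there is $\nu \in \text{Prob}(\mathbb{T})$ with $\mathscr{L}_f^* \nu = \chi \nu$, where $\chi := (\mathscr{L}_f^* \nu)(\mathbb{T}) = \int \mathscr{L}_f 1 \, d\nu > 0$.

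The heart of the proof is producing the eigenfunction $h \in \mathscr{C}_\Omega(\mathbb{T})$. I would apply $\chi^{-1}\mathscr{L}_f$ iteratively to the constant function $1$ and look for a convergent subsequence of the Cesàro averages $h_n := \frac{1}{n}\sum_{k=0}^{n-1} \chi^{-k}\mathscr{L}_f^k 1$. The key is a uniform a priori estimate: I claim each $\chi^{-k}\mathscr{L}_f^k 1$ (and hence each $h_n$) lies in a bounded subset of $\mathscr{C}_\Omega(\mathbb{T})$, i.e. $|\chi^{-k}\mathscr{L}_f^k 1|_\Omega \le C$ uniformly in $k$, together with a uniform two-sided bound $0 < a \le \chi^{-k}\mathscr{L}_f^k 1 \le b$. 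The uniform sup-bounds come from topological exactness plus a distortion estimate for Birkhoff sums of $f$ along inverse branches. For the $\Omega$-modulus bound, fix $x$ and $y$ with $d(x,y) < \varrho_1$; group the $T^{-k}$-preimages of $x$ and $y$ into the matched pre-orbit pairs $\{x_k\}$, $\{y_k\}$ guaranteed by $T$-compatibility (Definition~\ref{modulosepreimagens}). For each such pair I would bound
$$
\big| e^{S_k f(x_k)} - e^{S_k f(y_k)} \big| \le e^{S_k f(x_k)} \Big( \exp\big( |f|_\omega \textstyle\sum_{j=1}^k \omega(d(x_j,y_j)) \big) - 1 \Big),
$$
and then use $\sum_{j=1}^k \omega(d(x_j,y_j)) \le C_1^{-1}\big(\Omega(d(x_0,y_0)) - \Omega(d(x_k,y_k))\big) \le C_1^{-1}\Omega(d(x,y))$ from the second $T$-compatibility inequality. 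Since $\Omega(d(x,y)) \to 0$, the exponential factor is $\le 1 + C' \Omega(d(x,y))$ for $d(x,y)$ small; summing over pre-orbit pairs against $\chi^{-k}\mathscr{L}_f^k 1(x)$, which is uniformly bounded, yields $|\chi^{-k}\mathscr{L}_f^k 1(x) - \chi^{-k}\mathscr{L}_f^k 1(y)| \le C\,\Omega(d(x,y))$ uniformly in $k$, with the case $d(x,y) \ge \varrho_1$ absorbed into the sup-bound since $\Omega(\varrho_1) > 0$. By the Arzelà-Ascoli theorem (concavity of $\Omega$ gives equicontinuity) the sequence $\{h_n\}$ has a uniformly convergent subsequence; its limit $h$ is in $\mathscr{C}_\Omega(\mathbb{T})$ by the uniform modulus bound, satisfies $\chi^{-1}\mathscr{L}_f h = h$ by a standard telescoping argument ($\chi^{-1}\mathscr{L}_f h_n - h_n = \frac{1}{n}(\chi^{-n}\mathscr{L}_f^n 1 - 1) \to 0$ uniformly), and $h \ge a > 0$.

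Finally, simplicity of $\chi$ as an eigenvalue of $\mathscr{L}_f$: I would first note $\int h\, d\nu > 0$ so we may normalize $\int h \, d\nu = 1$, and then argue that any $g \in \mathscr{C}_\Omega(\mathbb{T})$ (more generally any continuous eigenfunction) with $\mathscr{L}_f g = \chi g$ is a scalar multiple of $h$: replacing $g$ by $g - (\int g \, d\nu) h$, which is also an eigenfunction with $\int g\,d\nu = 0$, and applying the contraction along pre-orbits to $g/h$ — the function $g/h$ would be $T$-invariant-like under the normalized operator and, by the same pre-orbit grouping argument showing oscillation contracts, must be constant, hence zero. (This last step is essentially the content of the convergence statement $\chi^{-n}\mathscr{L}_f^n\phi \to h\int\phi\,d\nu$, which is proved in Proposition~\ref{convergence of iterates}; here I only need the weaker consequence that the eigenspace is one-dimensional, and that the dual has no other positive eigenvalue of modulus $\ge \chi$, which follows because $|\mathscr{L}_f\phi| \le \mathscr{L}_f|\phi|$ forces any such eigenfunction to have constant modulus proportional to $h$.)

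**The main obstacle.** The delicate point is the uniform $\Omega$-modulus bound on $\chi^{-k}\mathscr{L}_f^k 1$: one must verify that the matched pre-orbits from $T$-compatibility actually exhaust all of $T^{-k}(x)$ and $T^{-k}(y)$ in a bijective way (the "one-to-one" clause in Definition~\ref{modulosepreimagens} is exactly what handles this), and that the unmatched or far-apart preimages — those at distance $\ge \varrho_1$ — contribute a controllable error; this is where topological exactness and the uniform distortion bound for $S_k f$ along inverse branches must be combined carefully. Everything else is bookkeeping with the concavity of $\Omega$ and the summability estimate already packaged into the $T$-compatibility hypothesis.
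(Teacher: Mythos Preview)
Your approach is correct and classical, but it diverges from the paper's at the eigenfunction step. The paper does \emph{not} iterate: after obtaining $\nu$ exactly as you do, it applies Schauder--Tychonoff a second time, now to the operator $\mathscr{T}=\chi^{-1}\mathscr{L}_f$ acting on the convex set
\[
\Lambda=\Big\{\phi\in C^0(\mathbb T):\ \phi\ge 0,\ \textstyle\int\phi\,d\nu=1,\ \phi(x)\le\phi(y)\,e^{\kappa_f\Omega(d(x,y))}\ \text{if}\ d(x,y)<\varrho_1\Big\},
\]
where $\kappa_f=C_1^{-1}|f|_\omega$. Invariance of $\Lambda$ under $\mathscr{T}$ is exactly the $T$-compatibility computation you wrote down, and compactness of $\Lambda$ (Arzel\`a--Ascoli, after a chaining argument with a finite cover by $\varrho_1$-arcs to pass from local to global control) gives the fixed point $h$ directly. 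Positivity of $h$ uses density of $\bigcup_n T^{-n}(z)$. For simplicity, the paper takes $z\in\argmin(\phi/h)$, sets $\hat\phi=\phi-\tfrac{\phi(z)}{h(z)}h\ge 0$, and observes $\mathscr{L}_f^n\hat\phi(z)=0$ forces $\hat\phi$ to vanish on the dense set $\bigcup_nT^{-n}(z)$; this is cleaner than your sketch via the normalized operator.

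Your Ces\`aro route works too, but one point is misattributed: the uniform two-sided bound $0<a\le\chi^{-k}\mathscr{L}_f^k\mathds 1\le b$ does not come from topological exactness. It comes from the identity $\int\chi^{-k}\mathscr{L}_f^k\mathds 1\,d\nu=1$ (this is where $\nu$ is essential) combined with the distortion ratio bound $\mathscr{L}_f^k\mathds 1(x)\le e^{\kappa_f\Omega(d(x,y))}\mathscr{L}_f^k\mathds 1(y)$ extended globally by chaining through a finite $\varrho_1$-cover. Also, your worry in the final paragraph about ``unmatched or far-apart preimages'' is unfounded: since $T$ is exactly $N_V$-to-one, the one-to-one clause in Definition~\ref{modulosepreimagens} forces the pre-orbit correspondence to be a bijection between $T^{-k}(x)$ and $T^{-k}(y)$ with all pairs at distance $<\varrho_1$; nothing is left over. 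What each approach buys: the paper's double-Schauder argument is structurally symmetric and avoids any limiting procedure for $h$; your Ces\`aro construction is more explicit and makes the uniform $\Omega$-bound on the iterates visible up front, which the paper only records afterwards when proving equicontinuity of $\{\chi^{-n}\mathscr{L}_f^n\phi\}$.
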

   
       \begin{proof}
		Recall that  $ \text{Prob}(\mathbb T) $ denotes the  space of Borel probability measures on $ \mathbb T $ equipped with the weak-star topology
	and the dual operator $ \mathscr L_f^* $ acts on it
 as follows
	$$
	\int \phi \, d(\mathscr L_f^*\mu) = \int \mathscr L_f \phi \, d\mu, \qquad \forall \, \phi \in  C^0(\mathbb T).
	$$
	Let $ \mathds 1 $ denote the function identically equal to 1 on $ \mathbb T $.
	The function $ \Phi $ defined on $ \text{Prob}(\mathbb T) $ as
	 $$\Phi(\mu):= \frac{\mathscr L_f^*\mu}{\int \mathscr L_f \mathds 1 d\mu} $$
	 is clearly continuous.
	Since $ \text{Prob}(\mathbb T) $ is a convex and compact set  which is  invariant by  $\Phi$, then   
	Schauder-Tyckhonov theorem guarantees $\Phi$ admits a fixed point of $ \nu \in \text{Prob}(\mathbb T) $. 
	Hence, for $\chi := \int \mathscr L_f\mathds 1 \, d\nu>0$, we have 
	\begin{equation*}
	\mathscr L_f^*\nu= \chi\,\nu.
	\end{equation*}

	Let $\varrho_1 $ and $ C_1$ be the constants that characterize the $T$-compatibility of $\Omega$ with respect to $\omega$ (see Definition~\ref{modulosepreimagens}).	
	Given $f\in \mathscr C_\omega$, denote $\kappa_f:= C_1^{-1}\vert f\vert_\omega$. Consider thus the following subset of $C^0(\mathbb T)$:
	\begin{equation*}
	\Lambda:=\left\{\phi\in  C^{0}(\mathbb T):\,
	\phi\ge 0, \, \int \phi \,d\nu=1, \,\,\phi(x)\le \phi(y)\, e^{\kappa_f \, \Omega(d(x,y))} \textrm{ if } d(x,y)<\varrho_1\right\}.
	\end{equation*}
	We have that $\Lambda$ is a convex and closed nonempty subset of  $ C^0(\mathbb T)$. We claim that $\Lambda$ is uniformly bounded. 
	In fact, let $ \{A_i\}_{i=1}^L $ be a finite cover of $ \mathbb T $ by open arcs of length $ \varrho_1 $ and let $ z_i \in \mathbb T $ denote the center of $ A_i $.
	Note that we may always suppose that these points are positively oriented, that is, $ z_1 < z_2 < \ldots < z_L $.
	Hence, given $x,y\in \mathbb T$, with $  x < y  $,
	 consider indexes  $i_x \le i_y$ for which
	$x\in A_{i_x} $ and $ y \in A_{i_y} $, 
	so that 
	$d(x, z_{i_x})< \varrho_1/2$, $d(y, z_{i_y})< \varrho_1/2$, and for every $i_x\le i< i_y$,  $d(z_i,z_{i+1})< \varrho_1$. 
	For an arbitrary $\phi $ in $ \Lambda$, the local property in the definition of this set provides
 \begin{align}
\phi(x)\le &\phi(z_{i_x})e^{\kappa_f \Omega(d(x,z_{i_x}))}  \nonumber \\ 
\le & \phi(y)  \exp\Big(\kappa_f\big(\Omega(d(x,z_{i_x}))+\displaystyle\sum_{i= i_{x}}^{i_{y}-1} \Omega(d(z_i,z_{i+1})) + \Omega(d(z_{i_y},y))\big)\Big) \nonumber \\
\le &\phi(y) e^{L \, \kappa_f \, \Omega(d(x,y))}. \label{extensao propriedade local de Lambda}
\end{align}
As $0\le \min \phi \le \int \phi d\nu=1$, in particular for $x, y\in \mathbb T$ such that $ \phi(x) = \| \phi \|_\infty $ and $\phi(y) = \min \phi $,  it follows
	 $$\|\phi\|_\infty \le \min \phi  \, \, e^{L \, \kappa_f \,\Omega(1/2)} \le e^{L \, \kappa_f \,\Omega(1/2)}. $$
The above estimates also ensure that $\Lambda$ is equicontinuous. In fact, as $ | e^a - 1 | \le | a | \, e^{|a|} $, we have
	\begin{equation}\label{continuidade uniforme em Lambda}
	\vert\phi(x)-\phi(y) \vert  \le \| \phi \|_\infty \, \big| e^{L \, \kappa_f \, \Omega(d(x,y))} - 1 \big|  \le L \kappa_f e^{2 L \, \kappa_f \, \Omega(1/2)}  \, \Omega(d(x,y)),
	\end{equation}
	for $ \phi \in \Lambda $ and $ x, y \in \mathbb T $.
	Therefore, by Arzel\`a-Ascoli theorem, the set $\Lambda$ is compact.
	
	For $ \phi \in \Lambda $, define 
	$$\mathscr T(\phi):= \mathscr L_{f - \log \chi}\phi = \frac{1}{\chi}\mathscr L_f\phi= \frac{\mathscr L_f\phi}{\int \mathscr L_f \mathds 1 d\nu} \ge 0.$$ 
	The set $\Lambda$ is invariant under the operator $\mathscr T$, that is: $\mathscr T (\Lambda)\subseteq \Lambda.$ 
	Indeed,  $\mathscr T$ is clearly a positive operator and we note that for $\phi\in \Lambda$, 
	\begin{equation*}
	\int \mathscr T(\phi) \; d\nu=\int \frac{1}{\chi}\mathscr L_f\phi \; d\nu= \int \frac{\phi}{\chi} \; d(\mathscr L_f^*\nu)=
	\int \phi \; d\nu=1. 
	\end{equation*}
Recall that we denote $N_V= 1+V(1)$. For a pair of points $x,y\in \mathbb T$ with $ d(x, y) \le \varrho_1 $, if for $1\le i\le N_V$, 
$  x_i $ denotes a preimage of $ x $,  let  $ y_i $ be the corresponding  preimage of $ y $ as stated in Definition~\ref{modulosepreimagens}.  Then,
the fact that $\mathscr T(\phi)(x)\le \mathscr T(\phi)(y) \, e^{\kappa_f \, \Omega(d(x,y))}$ is a consequence of 
	\begin{align*}
	\mathscr L_f\phi(x)=&\sum_{i=1}^{N_V} e^{f(x_i)} \phi(x_i)
	\le\sum_{i=1}^{N_V} e^{f(x_i)} \phi(y_i)\, e^{\kappa_f\,\Omega(d(x_i,y_i))}\\
	\le&\sum_{i=1}^{N_V} e^{f(y_i)+ \vert f \vert_\omega\, \omega(d(x_i,y_i))} \phi(y_i)\, e^{\kappa_f\,\Omega(d(x_i,y_i))}\\
	=&\sum_{i=1}^{N_V} e^{f(y_i)} \phi(y_i)\, e^{\kappa_f
		\big( C_1 \omega(d(x_i,y_i))+ \Omega(d(x_i,y_i))\big)}\\
	\le 	&\sum_{i=1}^{N_V} e^{f(y_i)} \phi(y_i)\, e^{\kappa_f\,
		\Omega(d(x,y))}
	= 	\mathscr L_f \phi(y) \, e^{\kappa_f\,
		\Omega(d(x,y))}.
	\end{align*}
	(Note that for the last inequality we apply the $T$-compatibility of $ \Omega $ with respect to $ \omega $.)
	Applying the Schauder-Tychonoff theorem for $\mathscr T: \Lambda\to \Lambda$,  there is $h\in \Lambda$ such that 
	$\mathscr L_f h = \chi h.$ From~\eqref{continuidade uniforme em Lambda}, $ h \in \mathscr C_\Omega(\mathbb T) $.
	To show that $h>0$, we suppose by contradiction that $h(z)=0$  for some  $z\in \mathbb T$. Hence $ \chi^{-n}\mathscr L_f^n h(z)=0$  for every $n\ge 1$. Then
	for every $y\in T^{-n}(z)$ we have that $h(y)=0$. Since $T$ is topologically mixing,  
	the set $\bigcup_{n\ge 0} T^{-n}(z)$ is dense, which implies that $h=0$ in $\mathbb T$. But since $h\in \Lambda$, we have $\int h d\nu=1$, which is a contradiction.
	
        To prove that $\chi $ is a simple eigenvalue of the operator $\mathscr L_f $, 
	let
	$\phi$ be a continuous function such that $\mathscr L_f \phi = \chi \,\phi $.
	Since $\mathbb T$ is compact, there is $z\in \mathbb T$ such that 
	$$
	\min_{x\in \mathbb T} \frac{\phi(x)}{h(x)} = \frac{\phi(z)}{h(z)}.
	$$
	The function $\hat \phi:= \phi-\frac{\phi(z)}{h(z)} h$ is continuous. Moreover,  $\hat \phi$ verifies for every $n \ge 1$, 
	$$\mathscr L_f^n\hat\phi(z)= \mathscr L_f^n\phi(z)-\chi^n\, \phi(z)=0.$$
	Hence, since $\hat \phi$ is nonnegative, as above the fact that  $T$  is topologically mixing implies that
	$\hat \phi = 0 $ on $\mathbb T$, i.e.,  
	$\phi=\frac{\phi(z)}{h(z)} h$. Therefore,  every eigenfunction for $\chi $ is a multiple of $h$. 
	\end{proof}

	\subsubsection*{Iterates of the transfer operator}

	We focus on the behavior of iterates of the transfer operator $ \mathscr L_f $ and we derive in particular the maximal character of the eigenvalue $ \chi $. 
	Henceforward, eigenfunction $ h $ and eigenprobability $ \nu $ are supposed  to fulfill $ \int h \, d\nu = 1 $.

        \begin{lemma}
		For $\phi \in C^0(\mathbb T)$, the sequence $\big\{ \frac{1}{\chi^n} \mathscr L_f^n \phi \big\}_{n\ge 1}$ is uniformly equicontinuous and uniformly bounded.
	\end{lemma}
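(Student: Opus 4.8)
The goal is to show that the family $\{\chi^{-n}\mathscr L_f^n\phi\}_{n\ge 1}$ is uniformly bounded and uniformly equicontinuous in $C^0(\mathbb T)$. The plan is to reduce everything to the case $\phi \in \Lambda$, where $\Lambda$ is the invariant cone already constructed in the proof of Proposition~\ref{autoequacoes}, and then to invoke the estimates obtained there. First I would observe that it suffices to treat $\phi \ge 0$ with $\int \phi\, d\nu = 1$: an arbitrary continuous $\phi$ can be written as $\phi = (\phi + \|\phi\|_\infty \mathds 1) - \|\phi\|_\infty \mathds 1$, both summands are nonnegative, and by linearity of $\mathscr L_f$ and the eigenequation $\mathscr L_f^n \mathds 1$ contributes a term controlled by $\chi^{-n}\mathscr L_f^n \mathds 1$, which itself (after normalizing the integral against $\nu$, using $\mathscr L_f^*\nu = \chi\nu$) lies in the span of elements of $\Lambda$; so normalizing $\int \phi\, d\nu = 1$ costs only a constant multiple depending on $\|\phi\|_\infty$.

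Next, for such a normalized nonnegative $\phi$ — which need not itself lie in $\Lambda$, since it may fail the local regularity inequality — I would show that $\chi^{-1}\mathscr L_f \phi$ already satisfies a regularity estimate of the required type. Indeed, repeating verbatim the chain of inequalities in the proof of Proposition~\ref{autoequacoes} with the crucial input of $T$-compatibility (Definition~\ref{modulosepreimagens}), one gets for $d(x,y) < \varrho_1$
$$
\mathscr L_f\phi(x) \le \sum_{i=1}^{N_V} e^{f(y_i)}\phi(y_i)\, e^{|f|_\omega\,\omega(d(x_i,y_i))},
$$
and since here $\phi$ carries no $\Omega$-factor, the $T$-compatibility inequality $\sum_j \omega(d(x_j,y_j)) \le C_1^{-1}\Omega(d(x_0,y_0))$ after one step (i.e. $k=1$) gives $\mathscr L_f\phi(x) \le \mathscr L_f\phi(y)\, e^{\kappa_f\,\Omega(d(x,y))}$ with the same $\kappa_f = C_1^{-1}|f|_\omega$ as before. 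Moreover $\int \chi^{-1}\mathscr L_f\phi\, d\nu = \int \phi\, d(\chi^{-1}\mathscr L_f^*\nu) = \int \phi\, d\nu = 1$ and $\chi^{-1}\mathscr L_f\phi \ge 0$. Hence $\chi^{-1}\mathscr L_f\phi \in \Lambda$, and since $\Lambda$ is $\mathscr T$-invariant, $\chi^{-n}\mathscr L_f^n\phi = \mathscr T^{n-1}(\chi^{-1}\mathscr L_f\phi) \in \Lambda$ for all $n \ge 1$.

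Finally I would conclude by quoting the two estimates established for elements of $\Lambda$ in the proof of Proposition~\ref{autoequacoes}: every $\psi \in \Lambda$ satisfies $\|\psi\|_\infty \le e^{L\kappa_f\Omega(1/2)}$ (uniform bound, from $\min\psi \le \int \psi\,d\nu = 1$ and the extended local inequality~\eqref{extensao propriedade local de Lambda}) and $|\psi(x)-\psi(y)| \le L\kappa_f e^{2L\kappa_f\Omega(1/2)}\,\Omega(d(x,y))$ for all $x,y$ (uniform equicontinuity, from~\eqref{continuidade uniforme em Lambda}, since $\Omega$ is a modulus of continuity and hence continuous at $0$ with $\Omega(0)=0$). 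Applying these to $\psi = \chi^{-n}\mathscr L_f^n\phi$ — and restoring the constant multiple from the normalization reduction — yields the claim for general $\phi \in C^0(\mathbb T)$. The only mildly delicate point, which I would flag as the main thing to get right rather than a genuine obstacle, is the bookkeeping in the reduction to $\phi\in\Lambda$: one must handle the additive and multiplicative constants uniformly in $n$, which works precisely because $\chi^{-n}\mathscr L_f^n\mathds 1$ stays bounded (it is, up to normalization by its $\nu$-integral, an element of $\Lambda$, and that $\nu$-integral equals $1$ for all $n$).
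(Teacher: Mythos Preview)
There is a genuine gap in your central claim that $\chi^{-1}\mathscr L_f\phi \in \Lambda$ for an arbitrary nonnegative continuous $\phi$ with $\int \phi\,d\nu=1$. The displayed inequality
\[
\mathscr L_f\phi(x) \le \sum_{i=1}^{N_V} e^{f(y_i)}\phi(y_i)\, e^{|f|_\omega\,\omega(d(x_i,y_i))}
\]
does not follow ``verbatim'' from the chain in Proposition~\ref{autoequacoes}. Starting from $\mathscr L_f\phi(x)=\sum_i e^{f(x_i)}\phi(x_i)$ and using only $f\in\mathscr C_\omega$ gives $\sum_i e^{f(y_i)}e^{|f|_\omega\omega(d(x_i,y_i))}\phi(x_i)$; to pass from $\phi(x_i)$ to $\phi(y_i)$ you need a multiplicative bound of the form $\phi(x_i)\le \phi(y_i)\,e^{\text{(something small)}}$, and a merely continuous $\phi$ provides no such control. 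In the proof of Proposition~\ref{autoequacoes} this step is licensed precisely by the hypothesis $\phi\in\Lambda$, which you are trying to dispense with. A single application of $\mathscr L_f$ does not smooth an arbitrary continuous function into $\Lambda$: if $\phi$ has a narrow spike near a point $p$, then $\mathscr L_f\phi$ has an equally narrow spike near $T(p)$, and the local log-Lipschitz condition defining $\Lambda$ fails.

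The paper's proof avoids this by \emph{not} trying to force the iterates into $\Lambda$. Instead it tracks the (arbitrary) modulus of continuity $\omega_\phi$ of $\phi$ explicitly: writing $|e^{S_nf(x_n)}\phi(x_n)-e^{S_nf(y_n)}\phi(y_n)|$ and splitting additively, one term is bounded by $e^{S_nf(x_n)}\omega_\phi(d(x,y))$ and the other by $\|\phi\|_\infty\,e^{S_nf(y_n)}$ times a factor controlled via~\eqref{variation sum} by $\kappa_f\Omega(d(x,y))$. Summing over preimages yields an estimate of the form $|\chi^{-n}\mathscr L_f^n\phi(x)-\chi^{-n}\mathscr L_f^n\phi(y)|\le C\big(\omega_\phi(d(x,y))+\|\phi\|_\infty\,\Omega(d(x,y))\big)$, where $C$ comes from a uniform bound on $\chi^{-n}\mathscr L_f^n\mathds 1$ (obtained from $h\in\Lambda$, as you correctly note). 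The resulting modulus depends on $\phi$ but not on $n$, which is exactly what is required. Your reduction to nonnegative normalized $\phi$ and your observation that $\mathds 1\in\Lambda$ are fine, but they do not circumvent the need to carry $\omega_\phi$ through the estimate.
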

	
	\begin{proof}
		Let $x, y\in \mathbb T$ be such that $d(x,y)<\varrho_1$. 
		Given $ \{x_k\}_{k \ge 1} $ a pre-orbit of $x$, the $T$-compatibility of $\Omega$ with respect to $\omega$ 
		ensures that there exists a unique pre-orbit $ \{y_k\}_{k \ge 1} $ of $y$ such that
		$d(x_k, y_k) \le d(x,y) < \varrho_1$ and $ \Omega\big(d(x_k,y_k)\big)+ C_1\, \sum_{j=1}^{k} \omega\big(d(x_j,y_j)\big) \le \Omega\big(d(x,y)\big) $, for all $ k \ge 1 $.
		In particular, for $f\in \mathscr C_\omega(\mathbb T) $, we have the following estimates for the corresponding Birkhoff sums  
		\begin{align}\label{variation sum}
		\nonumber\vert S_n f(x_n)-S_n f(y_n) \vert \le & \vert f\vert_\omega \sum_{j=0}^{n-1}\omega(d(T^j(x_n), T^j(y_n))) \\
		\le &
		 \kappa_f \Big(\Omega(d(x,y))-\Omega(d(x_n,y_n))\Big)\le 
		\kappa_f \; \Omega(d(x,y)),
		\end{align}
		where, as before, $\kappa_f = C_1^{-1}\vert f\vert_\omega$. 
		
		Keeping the notation of pairs of pre-images $ (x_n, y_n) $ 
		associated by the the correspondence established by $T$-compatibility of $\Omega$ with respect to $\omega$, we can write for every $\phi\in C^0(\mathbb T) $
		$$ \Big\vert\mathscr L_f^n \phi (x)-\mathscr L_f^n \phi (y)\Big\vert \le 
		 \sum_{(x_n, y_n)} \Big\vert e^{S_nf(x_n)}\phi(x_n)- e^{S_nf(y_n)}\phi(y_n) \Big\vert $$
		If we denote $ \omega_{\phi}(t):= \sup\{ \vert \phi(x)-\phi(y)\vert: x, y\in \mathbb T, \,  d(x,y)\le t \} $, we easily obtain 
		\begin{align*}
		\Big\vert e^{S_nf(x_n)}\phi(x_n) - & e^{S_nf(y_n)}\phi(y_n) \Big\vert \le \\
		& \le e^{S_nf(x_n)} \, \omega_{\phi}(d(x_n,y_n)) + \vert\vert\phi\vert\vert_\infty \, \Big\vert e^{S_nf(x_n)}-e^{S_nf(y_n)}\Big\vert \\
		& \le e^{S_nf(x_n)} \, \omega_{\phi}(d(x,y)) + \vert\vert\phi\vert\vert_\infty \, e^{S_nf(y_n)} \, \Big\vert e^{S_nf(x_n)- S_nf(y_n)} - 1\Big\vert.
	  \end{align*}
		Using~\eqref{variation sum} we get 
		$$ \Big\vert e^{S_nf(x_n)- S_nf(y_n)} - 1\Big\vert \le \kappa_f e^{\kappa_f \; \Omega(1/2)} \; \Omega(d(x,y)). $$
		Therefore, we have shown that
		\begin{align}\label{desigualdade iteradas}
		\Big\vert \mathscr L_f^n \phi (x) & -  \mathscr L_f^n \phi (y)\Big\vert \le  \nonumber \\
		& \le \omega_{\phi}(d(x,y)) \, \mathscr L_f^n \mathds 1(x) + \vert\vert\phi\vert\vert_\infty \kappa_f e^{\kappa_f \; \Omega(1/2)} \Omega(d(x,y)) \, \mathscr L_f^n \mathds 1 (y).
		\end{align}
		Note now that, from~\eqref{extensao propriedade local de Lambda}, the positive eigenfunction $ h $ (as any other element of the set $ \Lambda $) 
		satisfies $\frac{h(T^n(w))}{h(w)}\le e^{L \,\kappa_f \, \Omega(1/2)} $, for all $n\ge 1$ and $w \in \mathbb T $.
		Hence, we see that
		\begin{align*}
	         \frac{1}{\chi^n} \mathscr L_f^n \mathds 1(z) = \frac{1}{\chi^n}\sum_{w \in T^{-n}(z)} e^{S_nf(w)} 
	         & \le \frac{1}{\chi^n} \sum_{w \in T^{-n}(z)} e^{S_nf(w)} \frac{h}{h\circ T^n}(w) e^{L \, \kappa_f \, \Omega(1/2)} \\
		       & = e^{L \, \kappa_f \, \Omega(1/2)}\frac{1}{\chi^n h(z)} \mathscr L_f^n h(z) = e^{L \, \kappa_f \, \Omega(1/2)}.
		\end{align*}
		From the above discussion, we deduce that
		\begin{align*} 
		\Big\vert \frac{1}{\chi^n} \mathscr L_f^n\phi(x)- & \frac{1}{\chi^n} \mathscr L_f^n\phi(y)\Big\vert \le \\
		& \le e^{L \, \kappa_f \, \Omega(1/2)} \Big (\omega_{\phi}(d(x,y))  + \vert\vert\phi\vert\vert_\infty \kappa_f e^{\kappa_f \; \Omega(1/2)} \Omega(d(x,y)) \Big),
		\end{align*}
		from which we conclude that $\big\{ \frac{1}{\chi^n} \mathscr L_f^n \phi \big\}$ is  uniformly equicontinuous. 
		Moreover, since
		$\Big\vert \frac{1}{\chi^n} \mathscr L_f^n \phi (x) \Big\vert  \le  
		\vert \vert \phi \vert \vert_{\infty} \frac{1}{\chi^n} \mathscr L_f^n \mathds 1 (x) \le \vert\vert \phi\vert\vert_\infty e^{L \, \kappa_f \, \Omega(1/2)} $,
		the sequence $\big\{ \frac{1}{\chi^n} \mathscr L_f^n \phi \big\}$ is also uniformly bounded.
	\end{proof}

	\begin{proposition}\label{convergence of iterates}
	The sequence $\big\{ \frac{1}{\chi^n} \mathscr L_f^n \phi \big\}_{n\ge 1}$ converges uniformly to $h\int \phi\,  d \nu$.
	\end{proposition}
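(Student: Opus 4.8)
The plan is to pass to the normalized transfer operator and show that its iterates contract every continuous observable to a constant, whose value is then recovered from the fixed measure. Since, by Proposition~\ref{autoequacoes}, the eigenfunction $h$ is continuous and strictly positive, the operator $\widetilde{\mathscr L}\psi:=\frac{1}{\chi\,h}\,\mathscr L_f(h\psi)$ is well defined on $C^0(\mathbb T)$; it is positive and satisfies $\widetilde{\mathscr L}\mathds 1=\mathds 1$, whence $\|\widetilde{\mathscr L}^n\psi\|_\infty\le\|\psi\|_\infty$ for all $n$. Iterating $\mathscr L_f h=\chi h$ gives the intertwining $\chi^{-n}\mathscr L_f^n\phi=h\,\widetilde{\mathscr L}^n(\phi/h)$ for every $\phi\in C^0(\mathbb T)$, and one checks that $\widetilde{\mathscr L}^{*}\mu=\mu$ for the probability $\mu:=h\nu$ (recall $\int h\,d\nu=1$). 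So, with $\psi:=\phi/h$ and $\int\psi\,d\mu=\int\phi\,d\nu$, it suffices to show $\widetilde{\mathscr L}^n\psi\to\int\psi\,d\mu$ uniformly for each $\psi\in C^0(\mathbb T)$; applying the preceding lemma to $h\psi$ and using $h\,\widetilde{\mathscr L}^n\psi=\chi^{-n}\mathscr L_f^n(h\psi)$ shows that $\{\widetilde{\mathscr L}^n\psi\}_{n\ge 1}$ is uniformly equicontinuous and uniformly bounded.

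Next I would use the Markov character of $\widetilde{\mathscr L}$. Positivity together with $\widetilde{\mathscr L}\mathds 1=\mathds 1$ makes $m_n:=\min_{\mathbb T}\widetilde{\mathscr L}^n\psi$ non-decreasing and $M_n:=\max_{\mathbb T}\widetilde{\mathscr L}^n\psi$ non-increasing, so $m_n\uparrow m_*$ and $M_n\downarrow M_*$ with $m_*\le M_*$. By Arzel\`a--Ascoli, a subsequence converges uniformly, $\widetilde{\mathscr L}^{n_k}\psi\to v$, and since each $\widetilde{\mathscr L}^{j}$ is a bounded operator on $C^0(\mathbb T)$ one gets $\widetilde{\mathscr L}^{n_k+j}\psi\to\widetilde{\mathscr L}^{j}v$ uniformly for every fixed $j\ge0$; hence $\min\widetilde{\mathscr L}^{j}v=m_*$ and $\max\widetilde{\mathscr L}^{j}v=M_*$ for all $j\ge0$.

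The crux, which I expect to be the main obstacle, is to rule out $m_*<M_*$. Suppose $\delta:=M_*-m_*>0$, fix $x_*$ with $v(x_*)=M_*$, and choose by continuity an open set $U\ni x_*$ with $v>M_*-\delta/2$ on $U$. Using that $T$ is topologically exact, pick $N\ge1$ with $T^N(U)=\mathbb T$. Writing the iterate as a convex combination, $\widetilde{\mathscr L}^{N}v(z)=\sum_{y\in T^{-N}(z)}w_N(z,y)\,v(y)$ with $w_N(z,y)=\frac{e^{S_Nf(y)}h(y)}{\chi^N h(z)}>0$ and $\sum_{y}w_N(z,y)=1$, for each $z\in\mathbb T$ there is $y_z\in U\cap T^{-N}(z)$, and $w_N(z,y_z)\ge\kappa:=\frac{e^{-N\|f\|_\infty}\min_{\mathbb T}h}{\chi^N\max_{\mathbb T}h}>0$ uniformly in $z$. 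Isolating this term and bounding the remaining values of $v$ from below by $m_*$ gives $\widetilde{\mathscr L}^{N}v(z)\ge m_*+w_N(z,y_z)\,\delta/2\ge m_*+\kappa\delta/2$ for all $z$, so $\min\widetilde{\mathscr L}^{N}v\ge m_*+\kappa\delta/2>m_*$, contradicting $\min\widetilde{\mathscr L}^{N}v=m_*$. Thus $m_*=M_*=:c$, and $\|\widetilde{\mathscr L}^n\psi-c\|_\infty\le M_n-m_n\to0$; integrating this uniform limit against $\mu$ and using $\int\widetilde{\mathscr L}^n\psi\,d\mu=\int\psi\,d\mu$ identifies $c=\int\psi\,d\mu$. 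Undoing the normalization, $\chi^{-n}\mathscr L_f^n\phi=h\,\widetilde{\mathscr L}^n(\phi/h)$ converges uniformly to $h\int\phi\,d\nu$, as claimed.
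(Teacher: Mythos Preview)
Your proof is correct and follows the same overall strategy as the paper: normalize the transfer operator, use monotonicity of the oscillation under the Markov operator, extract a subsequential limit by Arzel\`a--Ascoli, show this limit is constant using topological exactness, and identify the constant via the invariant measure $\mu=h\nu$. The only noteworthy difference is in how constancy of the limit $v$ is established: the paper observes that $\max\widetilde{\mathscr L}^k v=\max v$ forces $T^{-k}(\argmax v)\subset\argmax v$, so topological mixing makes the argmax dense and $v$ constant; you instead exploit a uniform positive lower bound on the weights $w_N(z,y)$ together with topological exactness to produce a strict improvement of the minimum, contradicting $\min\widetilde{\mathscr L}^N v=m_*$. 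Both arguments are standard and equivalent in strength here; your version is slightly more quantitative, while the paper's avoids estimating the weights explicitly.
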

	
	\begin{proof}
	Thanks to the previous lemma and Arzel\`a-Ascoli theorem, we need to argue that any convergent subsequence  $ \big\{\frac{1}{\chi^{n_j}}\mathscr L_f^{n_j} \phi \big \}  $ has as uniform limit $h\int \phi\,  d \nu$.
	Suppose that  $ \big \{\frac{1}{\chi^{n_j}}\mathscr L_f^{n_j} \phi \big \}  $ converges uniformly to $ \phi_\infty $.
	
	Consider the normalized potential $\tilde f= f+\log h-\log h\circ T-\log \chi $ and note that for all $n\ge 1$ and $\psi \in C^{0}(\mathbb T)$,  $\frac{1}{\chi^n}\mathscr L^n_{f}\psi = h\mathscr L^n_{\tilde f}\big(\frac{\psi}{h}\big) $.
	Since $ \mathscr L_{\tilde f} \psi \le \max \psi \, \mathscr L_{\tilde f} \mathds 1 = \max \psi $, note also that
	\begin{equation}\label{decrescimento operador}
	 \cdots \le \max \mathscr  L^n_{\tilde f} \psi \le \cdots \le  \max \mathscr L^2_{\tilde f} \psi \le \max \mathscr L_{\tilde f} \psi \le \max \psi.
	\end{equation}
	
	We thus have that $ \Big\{ \max \mathscr L_{\tilde f}^{n}\big(\frac{\phi}{h}\big)\Big\}_{n \ge 1}$ is non-increasing and 
	$ \Big\{ \mathscr L_{\tilde f}^{n_j} \big(\frac{\phi}{h}\big) \Big\}_{j \ge 1}  $ converges uniformly  $ \frac{\phi_\infty}{h} $, 
	so that, given $ \epsilon > 0 $, for $ j $ sufficiently large,  		
	$$ \max \mathscr L_{\tilde f}^{k} \Big(\frac{\phi_\infty}{h}\Big) \ge \max \mathscr L_{\tilde f}^{k}\Big(  \mathscr L_{\tilde f}^{n_j} \big(\frac{\phi}{h} \big) \Big) + \epsilon 
	\ge  \max \mathscr L_{\tilde f}^{n_{k+j}} \big(\frac{\phi}{h} \big) + \epsilon,$$
        for any fixed $ k $. By passing to the limit as $ j $ tends to infinity and then considering $ \epsilon > 0 $ arbitrarily small, from~\eqref{decrescimento operador} we conclude that
        \begin{equation}\label{igualdademaximos}
         \max \mathscr L_{\tilde f}^{k} \big(\frac{\phi_\infty}{h}\big) = \max \frac{\phi_\infty}{h} \qquad \forall \, k \ge 1. 
         \end{equation}
        As $ \mathscr L_{\tilde f}^k \mathds 1 = \mathds 1 $, it follows that 
		$$ T^{-k} \Big( \argmax \mathscr L_{\tilde f}^k \big(  \frac{\phi_\infty}{h} \big)  \Big) \subset \argmax \frac{\phi_\infty}{h} \qquad \forall \, k \ge 1.	$$
	Since $T$ is topologically mixing, we thus obtain that $\frac{\phi_\infty}{h}$ attains its maximum value  in any nonempty open set of $\mathds T$. 
	Hence, by continuity, $\frac{\phi_\infty}{h}$ is identically constant.
	
	Note now that by the dominated convergence theorem
	$$ \lim_{j \to \infty} \int \mathscr L_{\tilde f}^{n_j} \big(\frac{\phi}{h}\big) \, d\mu = \int  \frac{\phi_\infty}{h}  \, d\mu =  \frac{\phi_\infty}{h}. $$
	Since $ \int   \mathscr L^n_{\tilde f} \psi \, d\mu = \int \psi \, d\mu $ for all $n\ge 1$ and $\psi \in C^{0}(\mathbb T)$, we have shown that  $\phi_\infty = h\int \frac{\phi}{h} d\mu = h\int \phi\,  d \nu$.
	\end{proof}
		
	We can now complete the proof of Theorem~\ref{t:Theorem 1} by discussing the maximality of the eigenvalue $ \chi $.

        \begin{proposition}\label{final teorema inicial}
        When acting on complex-valued continuous functions on $ \mathbb T $, 
        the transfer operator $  \mathscr L_f $ does not possess another eigenvalue with an absolute value strictly greater than or equal to $ \chi $.
        \end{proposition}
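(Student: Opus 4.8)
The plan is to assume $\mathscr L_f\phi=\zeta\phi$ for some nonzero complex-valued $\phi\in C^0(\mathbb T)$ and some $\zeta\in\mathbb C$ with $|\zeta|\ge\chi$, and to deduce that necessarily $\zeta=\chi$ (indeed that $\phi$ is a scalar multiple of $h$). As in the proof of Proposition~\ref{convergence of iterates}, I would pass to the normalized potential $\tilde f=f+\log h-\log h\circ T-\log\chi$, for which $\mathscr L_{\tilde f}\mathds 1=\mathds 1$ and $\chi^{-n}\mathscr L_f^n\psi=h\,\mathscr L_{\tilde f}^n(\psi/h)$ for every continuous $\psi$. Setting $\psi:=\phi/h$ (a genuine continuous function, since $h$ is continuous and strictly positive) and $\theta:=\zeta/\chi$, the eigenequation becomes $\mathscr L_{\tilde f}\psi=\theta\psi$ with $|\theta|\ge 1$.

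The next step exploits positivity just as in~\eqref{decrescimento operador}: since $\mathscr L_{\tilde f}$ is a positive operator with $\mathscr L_{\tilde f}\mathds 1=\mathds 1$, one has $|\mathscr L_{\tilde f}^n\psi|\le\mathscr L_{\tilde f}^n|\psi|\le\max|\psi|$ for all $n\ge 1$. Combined with $\mathscr L_{\tilde f}^n\psi=\theta^n\psi$ this gives $|\theta|^n\max|\psi|\le\max|\psi|$, which forces $|\theta|=1$; moreover the resulting chain of inequalities squeezes to $\max\mathscr L_{\tilde f}^n|\psi|=\max|\psi|$ for every $n\ge 1$. I would then repeat the topological argument used right after~\eqref{igualdademaximos}: if $x$ maximizes $\mathscr L_{\tilde f}^k|\psi|$, then, since the coefficients $e^{S_k\tilde f(y)}$, $y\in T^{-k}(x)$, are strictly positive and sum to $1$ (this being $\mathscr L_{\tilde f}^k\mathds 1(x)=1$), the identity $\max|\psi|=\sum_{y\in T^{-k}(x)}e^{S_k\tilde f(y)}|\psi(y)|$ forces $|\psi(y)|=\max|\psi|$ for all $y\in T^{-k}(x)$, so that $T^{-k}\big(\argmax\mathscr L_{\tilde f}^k|\psi|\big)\subset\argmax|\psi|$. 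Because $T$ is topologically exact, $\argmax|\psi|$ meets every nonempty open set, hence is dense, and by continuity $|\psi|$ is a positive constant, which I normalize to $1$.

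Finally, with $|\psi|\equiv 1$, I would invoke strict convexity of the closed unit disc. At any $x\in\mathbb T$, the value $\theta\psi(x)=\sum_{y\in T^{-1}(x)}e^{\tilde f(y)}\psi(y)$ is a convex combination, with strictly positive weights summing to $1$, of the unit vectors $\psi(y)$, and it has modulus $1$; equality in the triangle inequality then forces all $\psi(y)$ with $y\in T^{-1}(x)$ to coincide, each being equal to $\theta\psi(x)$. Thus $\psi(y)=\theta\,\psi(T(y))$ for every $y\in\mathbb T$. Evaluating this identity at the indifferent fixed point $y=0=T(0)$, where $\psi(0)\ne 0$, yields $\theta=1$, i.e. $\zeta=\chi$; and then $\psi=\psi\circ T$, so by topological transitivity of $T$ the function $\psi$ is constant and $\phi$ is a scalar multiple of $h$. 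The delicate step is the one showing $|\psi|$ constant: it works because transporting the maximum of $\mathscr L_{\tilde f}^k|\psi|$ backward along \emph{all} inverse branches at once is legitimate only thanks to $\mathscr L_{\tilde f}^k\mathds 1=\mathds 1$ (which makes the branch weights a probability vector), combined with the topological exactness of the maps in $\mathscr F$.
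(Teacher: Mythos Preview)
Your proof is correct and follows essentially the same approach as the paper: pass to the normalized operator $\mathscr L_{\tilde f}$, use positivity together with $\mathscr L_{\tilde f}\mathds 1=\mathds 1$ and topological exactness to force $|\psi|$ constant, then apply the equality case of the triangle inequality for a unit-modulus convex combination and evaluate at the fixed point $0$ to conclude $\theta=1$. The only cosmetic difference is that the paper phrases the last step via the angle parameterization $\phi=e^{2\pi i\theta(\cdot)}$, $c=be^{2\pi i\gamma}$, whereas you argue directly with $\psi(y)=\theta\,\psi(T(y))$.
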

        
        \begin{proof}
        Note that $  \mathscr L_f  \phi = c \phi $ for a (non null) complex-valued continuous function $ \phi $ and a constant $ | c | \ge \chi $ if, and only if, 
        $  \mathscr L_{\tilde f} \left( \frac{\phi}{h} \right) = \frac{c}{\chi} \frac{\phi}{h} $, where as before $\tilde f= f+\log h-\log h\circ T-\log \chi $. Thus, it suffices to show that  
        $  \mathscr L_{\tilde f} $ acting on complex-valued continuous functions admits only 1 as eigenvalue outside the open unit disc. 
         Suppose then $  \mathscr L_{\tilde f} \phi = c \phi $ with $ | c | \ge 1 $. Clearly, $ |\phi| \le  \mathscr L_{\tilde f}^k  |\phi| $ for all $ k \ge 1 $, so that 
         $ \max |\phi| \le \max \mathscr L_{\tilde f}^k  |\phi| \le \max |\phi| \max \mathscr L_{\tilde f}^k \mathds 1 = \max  |\phi| $. 
         We are exactly in the same situation as~\eqref{igualdademaximos}. Therefore, we obtain that $ |\phi| $ is constant, which we may assume equal to 1.
         Hence, we write $ \phi(x) = e^{2 \pi i \theta(x)} $ and $ c = b e^{2 \pi i \gamma} $ with $ b \ge 1 $ and $ \gamma \in \mathbb R $. 
         Since $  \mathscr L_{\tilde f} e^{2 \pi i \theta} = b e^{2 \pi i (\theta + \gamma)} $ represents a convex combination of extremal points of the unit disc,
         we conclude that $ b = 1 $ and $ \theta(y) = \theta(x) + \gamma \mod 1 $ for all $ x \in \mathbb T $ and $ y \in T^{-1}(x) $. In particular, for $ x=y=0 $
         we see that $ \gamma \in \mathbb Z $, and therefore $ c = 1 $.
         \end{proof}

	\section{Proof of Theorem~\ref{t:Theorem 2}}\label{segundoteorema}
	
	In this section, we discuss a succession of intermediate results, from which we will derive Theorem~\ref{t:Theorem 2}.
	Throughout the entire section, we will assume without mentioning the hypotheses of Theorem~\ref{t:Theorem 1}.
	Moreover, the positive eigenfunction $ h $  and the eigenprobability $ \nu $, both obtained in Theorem~\ref{t:Theorem 1}, 
	are from now on supposed to be related as $ \int h \; d\nu = 1 $.
	The statements of Theorem~\ref{t:Theorem 2} can be recovered from the statements of Lemma~\ref{lem: lem 4}, Proposition~\ref{Conditions 1 2 and 3} 
		 and Proposition~\ref{Prop1}.

		\subsubsection*{Equilibrium states from Rokhlin formula}
	
	For topological dynamical systems, whenever the measure entropy of $ T $ is upper semi-continuous with respect to the measure,
	one may guarantee the existence of an invariant probability attaining the supremum in the variational expression~\eqref{principio variacional}
	of the topological pressure, namely, the existence of an \emph{equilibrium state}. Moreover, if the topological entropy of the system is finite, 
	then the extreme points of the convex set of equilibrium states are exactly the ergodic members of this set.  See, for instance,~\cite[Theorem 9.13]{Wal82}.
	The fact that, for the maps we are dealing with, the measure entropy, regarded as a function of the measure, is upper semi-continuous follows from a 
	general result for piecewise monotone mappings of the circle~\cite[Corollary 2']{MS80}.
	
Our aim now is to show that $ \mu = h \nu $ is the unique equilibrium state associated with $ f $.
A key element in our argument will be Rokhlin formula for the measure entropy. We briefly recall the main ingredients.

Let $ m $ denote a Borel probability measure on $ \mathbb T $.
Let $ \{\mathscr A_n\}_{n \ge 1} $ be a sequence of measurable (countable) partitions of $ \mathbb T $, with finite $m$-entropy, such that $\mathscr A_n \preceq \mathscr A_{n+1}$ for all $n $.
We say that $ \{\mathscr A_n\}_{n \ge 1} $ is   \emph{$m$-generating}  if $ \bigcup_{n \ge 1} \mathscr A_n $ generates the Borel $\sigma$-algebra, up to $m$-measure zero.
For $m$-almost every $ x \in \mathbb T $, we denote $ \mathscr A_n(x) $ the element of the partition $ \mathscr A_n $ to which $ x $ belongs.
A sufficient condition for  $ \{\mathscr A_n\}_{n \ge 1} $ to be $m$-generating is to satisfy
\begin{equation}\label{diametro e gerador}
\Dia(\mathscr A_n(x)) \to 0 \quad \text{ as } n \to \infty, \qquad \text{ for $m$-a.e. } x \in \mathbb T.
\end{equation}
(For a proof of this fact, see, for instance, the proof of Corollary~9.2.8 in~\cite{VO16}.)	

Given a map $ T $ in $ \mathscr F$, a measurable function  $J_m(T): \mathbb T\to [0,\infty)$   is a  \emph{Jacobian}  of $T$ with respect to $ m $  if  for any  measurable  set $A$ such that $T\vert_A$ is injective, 
$$
m(T(A))= \int_A J_m(T) \, dm.
$$
Whenever $m$ is a  $T$-invariant probability measure, existence and uniqueness (up to $m$-measure zero) of a Jacobian of $T$ with respect to $m$ are well known. 
(For a more general result, see~\cite[Proposition 9.7.2]{VO16}.) 
For a $T$-invariant probability $ m $, it is easy to see that  $J_m(T)>0 $ $m$-almost everywhere. 
Moreover,
	\begin{equation}\label{Property Jacobian}
	\sum_{y\in T^{-1}(x)} \frac{1}{J_m(T)(y)}=1 \quad \textrm{ for } m\textrm{-almost every } x\in \mathbb T.
	\end{equation}
The following formula, due to V. Rokhlin, allow us to compute the entropy from the Jacobian. 
For a proof of this classical result, see, for instance, \cite{Par69}.

\begin{teorema}[Rokhlin formula] 
	Let $T$ be a locally invertible measurable transformation and $m$ be a $T$-invariant probability measure. 
	Suppose that domains of invertibility of $ T $ provide a partition $ \mathscr A_0 $ such that the sequence  
	$ \big\{ \vee_{j=1}^n T^{-j}(\mathscr A_0)\big\}_{n \ge 1} $ is $m$-generating. Then
	$$
	h_m(T) = \int \log J_m (T) \, dm.
	$$
\end{teorema}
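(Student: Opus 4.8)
The plan is to follow the classical route: reduce $ h_m(T) $ to a conditional entropy with respect to the pull-back of the Borel $\sigma$-algebra under $ T $, and then identify that conditional entropy with $ \int \log J_m(T)\, dm $ by means of the change-of-variables property encoded in the Jacobian. First I would record that, for $ T \in \mathscr F $, the partition $ \mathscr A_0 $ into domains of invertibility is finite (it has $ N_V = 1 + V(1) $ atoms), so $ H_m(\mathscr A_0) < \infty $ and all entropies below are finite. The hypothesis that $ \big\{ \bigvee_{j=1}^{n} T^{-j}\mathscr A_0 \big\}_{n \ge 1} $ is $m$-generating means precisely that $ \bigvee_{j=1}^{\infty} T^{-j}\mathscr A_0 = \mathcal B $ modulo $m$, where $ \mathcal B $ is the Borel $\sigma$-algebra; a fortiori $ \bigvee_{j=0}^{\infty} T^{-j}\mathscr A_0 = \mathcal B $ modulo $m$, so $ \mathscr A_0 $ is a Kolmogorov--Sinai generator and $ h_m(T) = h_m(T, \mathscr A_0) $. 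I would then invoke the standard identity $ h_m(T,\alpha) = H_m\big( \alpha \mid \bigvee_{j=1}^{\infty} T^{-j}\alpha \big) $, valid for every finite-entropy partition $ \alpha $: it follows from the telescoping relation $ H_m\big( \bigvee_{j=0}^{n-1} T^{-j}\alpha \big) = H_m(\alpha) + \sum_{k=1}^{n-1} H_m\big( \alpha \mid \bigvee_{j=1}^{k} T^{-j}\alpha \big) $ (using the $T$-invariance of $m$), from the monotonicity in $ k $ of $ H_m\big( \alpha \mid \bigvee_{j=1}^{k} T^{-j}\alpha \big) $, a Ces\`aro argument, and the convergence $ H_m(\alpha \mid \mathcal F_k) \to H_m(\alpha \mid \bigvee_k \mathcal F_k) $ along increasing $\sigma$-algebras $ \mathcal F_k $ (which is where $ H_m(\alpha) < \infty $ is used). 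Since $ \bigvee_{j=1}^{\infty} T^{-j}\mathscr A_0 = T^{-1}\big( \bigvee_{j=0}^{\infty} T^{-j}\mathscr A_0 \big) = T^{-1}\mathcal B $ modulo $m$, applying this with $ \alpha = \mathscr A_0 $ gives $ h_m(T) = H_m\big( \mathscr A_0 \mid T^{-1}\mathcal B \big) $.

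The core of the argument is then to show that, for each atom $ P \in \mathscr A_0 $ and $m$-almost every $ x \in P $,
$$ \mathbb E\big( \mathds 1_P \mid T^{-1}\mathcal B \big)(x) = \frac{1}{J_m(T)(x)} . $$
Since $ T^{-1}\mathcal B $-measurable functions are exactly those of the form $ g \circ T $ with $ g $ Borel, and $ m $ is $T$-invariant, it suffices to produce a Borel function $ g $ with $ m(P \cap T^{-1}B) = \int_B g\, dm $ for every Borel set $ B $. I would take $ g := \big( J_m(T) \circ (T|_P)^{-1} \big)^{-1} \mathds 1_{T(P)} $ and apply the change-of-variables formula $ \int_{T(A)} \psi\, dm = \int_A (\psi \circ T)\, J_m(T)\, dm $, which holds for $ T|_A $ injective and $ \psi \ge 0 $ Borel and is obtained from the defining relation $ m(T(A)) = \int_A J_m(T)\, dm $ by the usual passage from indicators to simple to nonnegative functions; taking $ A = P $ and $ \psi = \mathds 1_B\, g $ yields $ \int_B g\, dm = \int_P \mathds 1_{T^{-1}B}(x)\, g(T(x))\, J_m(T)(x)\, dm(x) = \int_P \mathds 1_{T^{-1}B}\, dm = m(P \cap T^{-1}B) $, because $ g(T(x)) = 1/J_m(T)(x) $ for $ x \in P $, $ x $ being the unique preimage of $ T(x) $ inside $ P $. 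Evaluating the corresponding conditional expectation on $ P $ gives the displayed formula; as a consistency check, summing over $ P \in \mathscr A_0 $ recovers $ \sum_{P} \mathbb E( \mathds 1_P \mid T^{-1}\mathcal B ) = 1 $, that is, property~\eqref{Property Jacobian}.

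To conclude, I would substitute this into the integrated conditional information of a partition, $ H_m\big( \mathscr A_0 \mid T^{-1}\mathcal B \big) = - \int \log \mathbb E\big( \mathds 1_{\mathscr A_0(x)} \mid T^{-1}\mathcal B \big)(x)\, dm(x) $, where $ \mathscr A_0(x) $ is the atom containing $ x $, obtaining $ H_m\big( \mathscr A_0 \mid T^{-1}\mathcal B \big) = - \int \log\big( 1/J_m(T) \big)\, dm = \int \log J_m(T)\, dm $; combined with the first paragraph this is the assertion. I expect the main obstacle to be the clean measure-theoretic bookkeeping of the central step --- justifying the Jacobian change-of-variables formula on the atoms of $ \mathscr A_0 $ and identifying $ \mathbb E( \mathds 1_P \mid T^{-1}\mathcal B ) $ with $ 1/J_m(T) $ --- whereas the surrounding reductions (the Kolmogorov--Sinai generator theorem, the identity $ h_m(T,\alpha) = H_m(\alpha \mid \bigvee_{j \ge 1} T^{-j}\alpha) $, and the martingale convergence of conditional entropy) are entirely standard.
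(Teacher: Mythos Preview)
The paper does not supply its own proof of this statement: the Rokhlin formula is quoted as a classical result and the reader is referred to Parry~\cite{Par69}. There is therefore nothing in the paper to compare against; your proposal is a self-contained argument where the authors simply cite the literature.

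That said, your argument is the standard one and is correct. The reduction $ h_m(T) = h_m(T,\mathscr A_0) = H_m\big(\mathscr A_0 \,\big|\, \bigvee_{j\ge 1} T^{-j}\mathscr A_0\big) = H_m(\mathscr A_0 \mid T^{-1}\mathcal B) $ is exactly how the formula is derived in, e.g., \cite{Par69} or \cite[Section~9.7]{VO16}, and your identification of $ \mathbb E(\mathds 1_P \mid T^{-1}\mathcal B) $ with $ 1/J_m(T) $ on $ P $ via the Jacobian change-of-variables is the heart of the matter and is carried out cleanly. One minor remark: the paper's statement indexes the generating sequence as $ \bigvee_{j=1}^{n} T^{-j}\mathscr A_0 $, starting at $ j=1 $, whereas in the application (the lemma immediately following) the authors actually verify that $ \big\{\bigvee_{j=0}^{n-1} T^{-j}\mathscr A\big\} $ is $m$-generating. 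Your proof only needs the weaker conclusion $ \bigvee_{j\ge 0} T^{-j}\mathscr A_0 = \mathcal B $ mod $ m $ to obtain both the Kolmogorov--Sinai generator property and the identification $ \bigvee_{j\ge 1} T^{-j}\mathscr A_0 = T^{-1}\mathcal B $ mod $ m $, so it covers either reading of the hypothesis without change.
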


An application of the previous facts will be summarized in the following lemma.

\begin{lemma}\label{Partition and Rokhlin formula}
	For $T$ a map in $\mathscr F$, let  $m$  be a  $T$-invariant probability that does not charge $ 0 $. 
	If the set of pre-images $\{a_i\}_{i=0}^{N_V-1}$ of $ a_0 := 0 =: a_{N_V} $ is supposed to be positively oriented, 
	let $ A_i $ be the positively oriented open arc from $ a_i $ to $ a_{i+1} $. 
	Denote  $\mathscr A:=\{A_i\}_{i=0}^{N_V-1}$ and $ \mathscr A_n: = \bigvee_{j=0}^{n-1} T^{-j}(\mathscr A) $. 
	Then $\{\mathscr A_n\}_{n\ge 1}$ is $m$-generating. 
	In particular, the measure $m$ satisfies the Rokhlin formula for the entropy
	$h_m(T)$ and the Jacobian $J_m(T)$.
\end{lemma}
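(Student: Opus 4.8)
The plan is to verify the hypotheses of Rokhlin's formula, the only real content being that $\{\mathscr A_n\}_{n\ge 1}$ is $m$-generating. By the remark following~\eqref{diametro e gerador}, it suffices to show that $\Dia(\mathscr A_n(x)) \to 0$ for $m$-almost every $x$. Note first that each element of $\mathscr A_n = \bigvee_{j=0}^{n-1} T^{-j}(\mathscr A)$ is an arc on which $T^{n-1}$ is injective (indeed, it is a branch domain of $T^{n-1}$ refined by the partition into the $A_i$'s), with endpoints mapping into the backward orbit of $0$. The difficulty is that $T$ is only \emph{non-uniformly} expanding: near the indifferent fixed point $0$ the expansion factor $1+\tfrac{1}{2^{\sigma+2}}V(d)$ from Lemma~\ref{desigualdade distancia} degenerates to $1$, so one cannot get a uniform exponential shrinking of the arcs. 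This is the main obstacle, and I would handle it by an argument that separates the orbit's behaviour near $0$ from its behaviour away from $0$.

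First I would fix $\epsilon>0$ with $\epsilon < \varrho_0$ (the constant of Lemma~\ref{desigualdade distancia}) and recall from the discussion opening section~\ref{Suficiente} that $T$ is uniformly expanding with factor $\lambda(\epsilon) = 1+V(\epsilon) > 1$ on $[\epsilon,1)$ at scales below $\varrho_V$. Since $0$ is the unique fixed point and $m(\{0\})=0$, and since $m$ is $T$-invariant, I would argue that for $m$-a.e.\ $x$ the orbit $\{T^j(x)\}_{j\ge 0}$ returns to the ``good'' region $\{\,d(\cdot,0)\ge \epsilon\,\}$ infinitely often: otherwise the orbit would eventually stay in $[0,\epsilon)$, where $T$ is an orientation-preserving homeomorphism onto its image pushing points monotonically towards $0$, forcing (by Poincaré recurrence applied to $m$, or directly since $m$ is invariant and does not charge $0$) a contradiction with invariance. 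Consequently the forward itinerary of $m$-a.e.\ point is not eventually trapped near $0$. The point is that each visit of $T^j(x)$ to $[\epsilon,1)$ contributes a definite expansion factor $\lambda(\epsilon)$ to the length of the cylinder containing $x$, read backwards: if $\mathscr A_n(x)$ had length bounded below along a subsequence, then pulling back along the branch of $T^{n-1}$ we would find arbitrarily large orbit segments on which the total expansion stays bounded, contradicting the infinitely many $\lambda(\epsilon)$-expanding steps together with the fact that the non-uniform steps near $0$ (Lemma~\ref{desigualdade distancia}) are in any case expanding by a factor $\ge 1$ and never contract.

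Let me spell out the quantitative core. Set $\gamma_n = \Dia(\mathscr A_n(x))$. As long as $\gamma_n$ (hence every intermediate arc $T^j(\mathscr A_n(x))$, $0\le j\le n-1$) stays below $\min\{\varrho_0,\varrho_V\}$, Lemma~\ref{desigualdade distancia} and the estimate on $[\epsilon,1)$ give
\begin{equation*}
\Dia\big(T^{j+1}(\mathscr A_n(x))\big) \ge \Dia\big(T^{j}(\mathscr A_n(x))\big)\cdot
\begin{cases}
\lambda(\epsilon), & T^{j}(\mathscr A_n(x)) \subset [\epsilon,1),\\[2pt]
1, & \text{otherwise,}
\end{cases}
\end{equation*}
so that $\Dia\big(T^{n-1}(\mathscr A_n(x))\big) \ge \gamma_n\,\lambda(\epsilon)^{r_n(x)}$, where $r_n(x)$ counts the indices $0\le j\le n-2$ for which $T^j(\mathscr A_n(x))$ lies entirely in $[\epsilon,1)$; and $r_n(x)\to\infty$ because the orbit of $x$ visits $[2\epsilon,1)$ infinitely often while $\gamma_n$ is small (so the arc $T^j(\mathscr A_n(x))$, being shorter than $\epsilon$, sits inside $[\epsilon,1)$ whenever its left endpoint's orbit point does). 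Since the left-hand side is at most $1$, this forces $\gamma_n \le \lambda(\epsilon)^{-r_n(x)} \to 0$; and if at some stage an intermediate arc exceeded the threshold $\min\{\varrho_0,\varrho_V\}$, we would shrink $\epsilon$ once and for all and repeat — more carefully, one runs the argument contrapositively: assuming $\limsup_n \gamma_n = \delta > 0$, pass to a subsequence along which $\gamma_n \ge \delta/2$, note each such cylinder is a genuine branch domain of $T^{n-1}$, and derive the same contradiction from the infinitely many expanding visits. Thus $\Dia(\mathscr A_n(x))\to 0$ for $m$-a.e.\ $x$, so $\{\mathscr A_n\}_{n\ge 1}$ is $m$-generating by~\eqref{diametro e gerador}. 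Finally, $\mathscr A$ is exactly the partition of $\mathbb T$ into the domains of invertibility of $T$ (the $N_V$ branches), so $\mathscr A_n = \bigvee_{j=0}^{n-1}T^{-j}(\mathscr A)$ is the partition appearing in the hypothesis of Rokhlin's formula; since $m$ is $T$-invariant, a Jacobian $J_m(T)$ exists and is unique up to $m$-null sets, and Rokhlin's formula yields $h_m(T) = \int \log J_m(T)\,dm$, completing the proof.
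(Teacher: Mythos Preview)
Your approach via expansion rates is genuinely different from the paper's, which is purely topological and much shorter. The paper simply observes that if $\Dia(\mathscr A_{n_j}(x)) \ge \kappa$ along a subsequence, then the nested arcs $\mathscr A_{n_j}(x)$ contain a common nonempty open set $U$; since $T^k(U)\subset A_{i_k}$ for every $k\ge 0$ by construction of the cylinders, this contradicts the topological exactness of $T$ (some iterate of any open set is all of $\mathbb T$). No expansion estimates, no measure theory, and the conclusion actually holds for every $x$ outside the countable backward orbit of $0$.

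Your route can be made to work, but as written it has a real gap. The claim ``$r_n(x)\to\infty$ because the orbit visits $[2\epsilon,1)$ infinitely often while $\gamma_n$ is small'' is circular: $\gamma_n\to 0$ is exactly what you are proving, and the intermediate arcs $T^j(\mathscr A_n(x))$ have diameter \emph{at least} $\gamma_n$ (each application of $T$ is non-contracting), so you cannot assume they are shorter than $\epsilon$. The contrapositive sketch inherits the same defect, and shrinking $\epsilon$ does nothing to the fixed thresholds $\varrho_0,\varrho_V$. The clean fix is to note that $T^j(\mathscr A_n(x))\subset A_{i_j}$ always; hence whenever $T^j(x)\in[a_1,1)$ one has $i_j\ge 1$ and the whole arc lies in $[a_1,1)$ automatically, with no smallness hypothesis. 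Counting those visits and using arc length (for which the expansion factor $1+V(a_1)$ on $[a_1,1)$ needs no scale restriction) then gives $\gamma_n\le (1+V(a_1))^{-r_n(x)}\to 0$. A minor slip: near $0$ the map pushes points \emph{away} from the fixed point ($T(x)=x(1+V(x))>x$), not towards it, though your conclusion that orbits escape $[0,\epsilon)$ remains correct for that reason.
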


\begin{proof}	
	Obviously by invariance $ m $ does not charge any point $ a_i $, and therefore $ \{A_i\}_{i=0}^{N_V-1} $ is a partition of $\mathbb T$ with respect to $ m $. 
	To prove that the monotone sequence $\{\mathscr A_n\}$ is $m$-generating we will show that~\eqref{diametro e gerador} holds. 
	Actually, this property follows easily from the fact that $T$ is topologically exact. 
	Indeed,  an element $\mathscr A_n(x) $ of $\mathscr A_n$ is of the form
	$$
	\mathscr A_n(x)= \bigcap_{j=0}^{n-1} T^{-j}(A_{i_j}),
	$$
	where $A_{i_j}$ is the open arc from $a_{i_j}$ to $a_{i_j+1}$, $i_j\in\{0,1,\cdots, N_V-1\}$.
	Now if the diameters would not shrink for a particular $ x \in \mathbb T \setminus \bigcap_{j \ge 0} T^{-j}\big(\{a_0, \ldots, a_{N_V-1}\}\big) $,
	then there would exist~$\kappa>0$ and a sequence $ \{n_j\}_{j\ge 0} $, with  $n_j\to +\infty$ as $j\to +\infty$, 
	such that for every $j\ge 0 $,
	$$\Dia (\mathscr A_{n_j}(x))\ge \kappa.$$
	Hence, for an open nonempty subset $U\subset \displaystyle \bigcap_{j=0}^{+\infty} \mathscr A_{n_j}(x)$, we would have
	$$T^k(U)\subset  T^k \big ( \bigcap_{j=0}^{+\infty}  \mathscr A_{n_j}(x) \big) \subset   A_{i_k}, \quad \forall \, k\ge 1.$$
	However, there exists $M>0$ such that 
	$$\mathbb T=T^M(U) \subset   A_{i_M} ,$$ 
	which is a contradiction. Thus, property~\eqref{diametro e gerador} holds and the Rokhlin formula can be applied to the probability $ m $.
\end{proof}

Our first goal is to show that the measure obtained from the eigenfunction of the transfer operator and the eigenprobability of the dual operator 
satisfies the conditions of the preceding lemma.

	\begin{lemma} \label{lem: lem 4} For a map $ T $ in $\mathscr F$, let $\{a_i\}_{i=1}^{N_V-1}$  denote the points of $\mathbb T \setminus \{0\} $ such that $T(a_i)=0$.	
	Then, $\mu= h\nu $ is a $T$-invariant probability that does not charge either $0$ or any  $a_i$, $i=1,\cdots, N_V-1$. Furthermore, the Jacobian of $T$ with respect to $ \mu $ 
	is given as $J_\mu(T)= \chi \, \frac{h\circ T}{h} \, e^{-f} $.
	\end{lemma}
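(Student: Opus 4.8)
The plan is to establish three things in order: $T$-invariance of $\mu$, that $\mu$ charges neither $0$ nor any $a_i$, and the explicit formula for the Jacobian.

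First I would prove $T$-invariance. Since $\nu$ is an eigenmeasure of $\mathscr L_f^*$ with eigenvalue $\chi$ and $h$ is an eigenfunction of $\mathscr L_f$ with the same eigenvalue, for any $\phi \in C^0(\mathbb T)$ one computes
\begin{align*}
\int \phi \circ T \; d\mu = \int (\phi \circ T) \, h \; d\nu = \frac{1}{\chi} \int \mathscr L_f\big( (\phi \circ T)\, h \big) \; d\nu = \frac{1}{\chi} \int \phi \, \mathscr L_f h \; d\nu = \int \phi \, h \; d\nu = \int \phi \; d\mu,
\end{align*}
where the third equality uses the identity $\mathscr L_f((\phi \circ T)\, h)(x) = \phi(x)\, \mathscr L_f h(x)$, which follows directly from the definition of $\mathscr L_f$ since $\phi \circ T$ is constant equal to $\phi(x)$ on $T^{-1}(x)$. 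Hence $\mu \in M(\mathbb T, T)$; note $\mu$ is a probability because $\int h \, d\nu = 1$.

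Next I would show $\mu(\{0\}) = 0$, which is where I expect the only real subtlety to lie. By invariance it suffices to rule out an atom at $0$. Suppose $\mu(\{0\}) = a > 0$; since $T(0) = 0$ and every other preimage chain of $0$ eventually leaves a neighborhood, invariance forces $\mu(T^{-1}(\{0\})) = a$, but $T^{-1}(\{0\}) = \{0, a_1, \dots, a_{N_V-1}\}$, so $\mu$ assigns total mass $a$ to this finite set, with $\mu(\{0\}) = a$; this already gives $\mu(\{a_i\}) = 0$ for each $i \geq 1$ once we know $\mu(\{0\}) = a$. To derive a contradiction with $\mu(\{0\}) = a > 0$ I would use the eigenequation pointwise at $0$: from $\mathscr L_f h(0) = \chi h(0)$ and $0 \in T^{-1}(0)$ we get $\chi h(0) = e^{f(0)} h(0) + \sum_{i=1}^{N_V-1} e^{f(a_i)} h(a_i) \geq e^{f(0)} h(0)$, and since $h(0) > 0$ this yields $\chi \geq e^{f(0)}$; in fact $\chi > e^{f(0)}$ because $N_V = 1 + V(1) \geq 2$ and all terms are strictly positive. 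On the other hand, the invariance relation applied carefully: write $\mu(\{0\}) = \int \mathds 1_{\{0\}}\circ T \; d\mu$. Here $\mathds 1_{\{0\}} \circ T$ is the indicator of $T^{-1}(\{0\})$, so this just recovers $\mu(T^{-1}(\{0\})) = \mu(\{0\})$ and is not yet a contradiction. Instead I would iterate and compare the Dirac measure $\delta_0$ with $\mu$: the point is that $\delta_0$ is itself $T$-invariant, and the decomposition of $\mu$ into $\mu = a\, \delta_0 + (1-a)\mu'$ with $\mu'$ a $T$-invariant probability with $\mu'(\{0\}) = 0$ (by ergodic decomposition, the atomic part supported on the fixed point splits off) reduces matters to showing $\delta_0$ cannot appear, i.e. $\mu \neq \delta_0$ and more. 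The cleanest route is the one the introduction advertises: use the Rokhlin-formula machinery together with the eigenequation. Concretely, if $\mu(\{0\}) > 0$, then since $h$ is continuous and strictly positive, $\nu(\{0\}) > 0$ too, and then $\chi \nu(\{0\}) = (\mathscr L_f^* \nu)(\{0\}) = \int \mathscr L_f \mathds 1_{\{0\}} \; d\nu = \int \big(\sum_{y \in T^{-1}(x)} e^{f(y)} \mathds 1_{\{0\}}(y)\big) d\nu(x)$. The integrand is supported on $x = T(0) = 0$ and equals $e^{f(0)}$ there (plus contributions at $x = 0$ from other preimages equal to $0$, but the only preimage equal to $0$ is $0$ itself), giving $\chi \nu(\{0\}) = e^{f(0)} \nu(\{0\})$, hence $\chi = e^{f(0)}$, contradicting $\chi > e^{f(0)}$ established above. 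This also forces $\nu(\{a_i\}) = 0$ and hence $\mu(\{a_i\}) = 0$ for all $i \geq 1$; alternatively these follow from invariance of $\mu$ once $\mu(\{0\}) = 0$, since $T(a_i) = 0$.

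Finally I would identify the Jacobian. Set $J := \chi \, \dfrac{h \circ T}{h} \, e^{-f}$, which is continuous, strictly positive, and well defined $\mu$-a.e. (indeed everywhere, since $h > 0$). I must check that for every Borel $A$ on which $T$ is injective, $\mu(T(A)) = \int_A J \, d\mu$. By the previous lemma $\mu$ does not charge $0$ or any $a_i$, so it suffices to verify the identity for $A$ contained in one of the arcs $A_i$ on which $T$ is a homeomorphism onto its image; a monotone-class / standard measure-theoretic argument then extends it to all injectivity domains. For such $A$, writing $\psi = \mathds 1_{T(A)}$ and noting that $(\psi \circ T)\, \mathds 1_{A} = \mathds 1_{A}$ while for $x \in T(A)$ exactly one preimage $y = (T|_{A_i})^{-1}(x)$ lies in $A$, we compute
\begin{align*}
\int_A J \; d\mu = \int (\mathds 1_A \cdot J)\, h \; d\nu = \int \mathds 1_A \, \chi\, (h\circ T)\, e^{-f} \; d\nu = \chi \int \mathds 1_A\, (h \circ T)\, e^{-f} \; d\nu.
\end{align*}
On the other hand, using $\mathscr L_f^* \nu = \chi \nu$ with the test function $\mathds 1_{T(A)} \cdot h$ (approximated by continuous functions, using that $\nu$ gives no mass to the endpoints $a_i$ so the boundary effects vanish),
\begin{align*}
\chi \int \mathds 1_{T(A)}\, h \; d\nu = \int \mathscr L_f\big(\mathds 1_{T(A)} \, h\big) \; d\nu = \int \sum_{y \in T^{-1}(x)} e^{f(y)} \mathds 1_{T(A)}(T(y))\, h(y)\, \mathds 1_?\; d\nu(x),
\end{align*}
and matching the single preimage in $A$ gives $\chi\, \mu(T(A)) = \chi \int \mathds 1_{T(A)} h \, d\nu = \chi \int_A (h\circ T)\, e^{-f}\, d\nu$, i.e. $\mu(T(A)) = \int_A J\, d\mu$; by uniqueness of the Jacobian (which holds since $\mu$ is $T$-invariant), $J = J_\mu(T)$ $\mu$-a.e. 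The main obstacle is the atom argument at $0$: one must be careful that naive invariance identities are tautological, and the decisive input is the pointwise eigenequation for $\nu$ (equivalently for $h$) at the fixed point, which is exactly why the introduction stresses that having the eigenequation is what eliminates the Dirac delta as a candidate equilibrium state.
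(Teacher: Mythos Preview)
Your argument for $T$-invariance is correct and essentially the paper's (the paper phrases it via the normalized operator $\mathscr L_{\tilde f}$ with $\tilde f = f + \log h - \log h\circ T - \log\chi$, but the content is identical). Your eventual argument for $\mu(\{0\})=0$ --- passing to $\nu$, computing $\chi\,\nu(\{0\}) = \int \mathscr L_f \mathds 1_{\{0\}}\,d\nu = e^{f(0)}\nu(\{0\})$, and contradicting $\chi > e^{f(0)}$ obtained from the eigenequation $\mathscr L_f h(0)=\chi h(0)$ --- is valid and arguably cleaner than the paper's route, which instead shows that $\mu(\{0\})>0$ would force the linear relation $(e^{f(0)}-\chi)h(0)\,\psi(0)+\sum_{i} e^{f(a_i)}h(a_i)\,\psi(a_i)=0$ for \emph{every} continuous $\psi$, an evident impossibility. (You should prune the false starts with ergodic decomposition, which you yourself recognize as tautological.)

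The Jacobian computation, however, has a genuine gap. Testing the dual eigenequation against $\mathds 1_{T(A)}\,h$ is a tautology: since $\mathds 1_{T(A)}\circ T$ is constant on each fibre $T^{-1}(x)$, one gets $\mathscr L_f(\mathds 1_{T(A)}\,h)=\mathds 1_{T(A)}\,\mathscr L_f h = \chi\,\mathds 1_{T(A)}\,h$, and both sides of the eigenequation collapse to $\chi\,\mu(T(A))$. Your attempted ``matching the single preimage in $A$'' does not recover anything: the asserted chain $\chi\,\mu(T(A))=\chi\int_A (h\circ T)\,e^{-f}\,d\nu$ is simply off by a factor of $\chi$ and is nowhere justified. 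The correct test function is (a continuous approximation of) $\mathds 1_A\,(h\circ T)\,e^{-f}$, which is exactly what the paper uses: then
\[
\chi\int_A (h\circ T)\,e^{-f}\,d\nu
=\int \mathscr L_f\big(\mathds 1_A\,(h\circ T)\,e^{-f}\big)\,d\nu
=\int h(x)\sum_{y\in T^{-1}(x)}\mathds 1_A(y)\,d\nu(x)
=\mu(T(A)),
\]
the last step using injectivity of $T|_A$ so that $\sum_{y\in T^{-1}(x)}\mathds 1_A(y)=\mathds 1_{T(A)}(x)$. Since the left-hand side equals $\int_A J\,d\mu$, this yields $J_\mu(T)=\chi\,\dfrac{h\circ T}{h}\,e^{-f}$ as claimed.
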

	
	\begin{proof}
		Consider once more the normalized potential $\tilde f= f+\log h-\log h\circ T-\log \chi$ and the associated transfer operator $ \mathscr L_{\tilde f} $.
		The invariant property of $\mu$ follows thus immediately: for all $\psi$ in $C^0(\mathbb T)$,
		\begin{equation*}
		\int\psi\circ T\, d\mu= \int\psi\circ T\, d(\mathscr L_{\tilde f}^*\mu)
		= \int \mathscr L_{\tilde f}(\psi\circ T)\, d\mu= \int \psi\mathscr L_{\tilde f}\mathds 1 \, d\mu=\int \psi\, d\mu.
		\end{equation*}
		
		Note now that, by this invariant property, 
		\begin{equation*}
		\mu(\{0\})= \mu(T^{-1}(0))= \mu(\{0\})+\sum_{i=1}^{N_V-1}\mu(\{a_i\}),
		\end{equation*}
		which implies $\mu(\{a_i\})=0$ for $i=1,\cdots, N_V-1$.
		We also note that $\mu(\{0\})=0$. Otherwise, if we suppose  $\mu(\{0\})>0$, we would have for $\psi$ in $C^0(\mathbb T)$,
		\begin{align*}
		\frac{1}{\chi h(0)}\mathscr L_f(h\psi)(0) \, \mu(\{0\})  
		& =  \int_{\{0\}} \mathscr L_{\tilde f} \psi \, d\mu =  \int \mathscr L_{\tilde f} (\mathds 1_{T^{-1}(0)} \psi) \, d\mu \\
		& = \int_{T^{-1}(0)} \psi \, d\mu = \psi(0)\,\mu(\{0\}).
		\end{align*}
		(Here $ \mathds 1_{T^{-1}(0)} $ represents the indicator function on the set of pre-images of 0.)
		Hence, the following (linear) equation would hold for every  $\psi$ in $C^0(\mathbb T)$,
		\begin{equation*}
		\big(e^{f(0)}- \chi\big) h(0) \, \psi(0) + \sum_{i=1}^{N_V-1} e^{f(a_i)} h(a_i) \, \psi(a_i) = 0,
		\end{equation*}
		which is clearly impossible.
		
		With respect to the Jacobian, let $A$  be a measurable  set such that $T\vert_A$ is injective. 
		For a sequence  $\{\psi_n\} \subset C^0(\mathbb T)$ converging to the indicator function on $ A $  $\nu$-almost every point, by the dominated convergence theorem, 
	\begin{align*}
	\int_A \chi  \frac{h\circ T}{h} e^{-f} \, d\mu &= \lim_{n\to \infty} \int \chi \,  h\circ T \, e^{-f} \, \psi_n \, d\nu
	                                                                    = \lim_{n\to \infty} \int \mathscr L_f( h\circ T\, e^{-f }\, \psi_n) \, d\nu \\ 
	                                                                 &= \lim_{n\to \infty} \int \mathscr L_f(e^{-f} \, \psi_n) \, d\mu= \mu(T(A)),
	\end{align*}
	since $ \mathscr L_f \big(e^{-f} \psi_n\big)(x)= \sum_{y\in T^{-1}(x)}\psi_n(y) \to \mathds 1_{T(A)}(x) $,  $\nu$-almost every $ x \in \mathbb T $.
	\end{proof}

It is well known that the topological pressure may be introduced by means of open coverings. 
We recall the main aspects of this formulation here and we refer the reader to \cite{Wal82} for more details. 
Given an open  cover  $\mathscr A$ of  $\mathbb T$, consider 
\begin{equation*}
p_n(T,f,\mathscr A):=\inf _{\mathscr B} \, \sum_{B \in \mathscr B}\exp\big(\sup_{x \in B} S_nf(x)\big),
\end{equation*}
where $\mathscr B$ is a finite subcover of $\mathbb T$  contained in $ \mathscr A \vee T^{-1} \mathscr A\vee \cdots \vee T^{-(n-1)} \mathscr A$.
Then the topological pressure may be defined as
\begin{equation*}
P(T,f):=  \lim_{\epsilon \to 0} \, \sup_{\text{diam}(\mathscr A) \le \epsilon} \, \lim_{n\to \infty}\frac{1}{n}\log p_n(T,f,\mathscr A).
\end{equation*}

\begin{lemma}\label{pressure log}
The following inequality holds: $ \log \chi \le P(T,f)$.
\end{lemma}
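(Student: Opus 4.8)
The plan is to produce, for each $n$ and each open cover $\mathscr A$, a lower bound on $p_n(T,f,\mathscr A)$ that grows like $\chi^n$, and then pass to the limits defining $P(T,f)$. First I would fix an open cover $\mathscr A$ of $\mathbb T$ with $\operatorname{diam}(\mathscr A)\le\epsilon$ for a small $\epsilon$, and let $\mathscr B$ be an arbitrary finite subcover contained in $\mathscr A\vee T^{-1}\mathscr A\vee\cdots\vee T^{-(n-1)}\mathscr A$; each $B\in\mathscr B$ satisfies $\operatorname{diam}(T^jB)\le\epsilon$ for $0\le j\le n-1$, hence in particular $\operatorname{diam}(B)\le\epsilon$. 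The key observation is that, since $\mathscr B$ covers $\mathbb T$, one can estimate $\chi^n=\chi^n\int h\,d\nu$ (using $\int h\,d\nu=1$) from above using the eigenequation $\mathscr L_f^n h=\chi^n h$: writing $\chi^n h(x)=\mathscr L_f^n h(x)=\sum_{y\in T^{-n}(x)}e^{S_nf(y)}h(y)$, each preimage $y$ lies in some element of $T^{-n}\mathscr A$, and more to the point, grouping preimages by the cylinder of $\mathscr A\vee\cdots\vee T^{-(n-1)}\mathscr A$ they belong to, the total mass $\int \chi^n h\,d\nu=\chi^n$ gets distributed over the members of $\mathscr B$.

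More concretely, the second step is to run the standard comparison: integrate the eigenequation against $\nu$ and dominate. For $B\in\mathscr B$, every point $y$ with the property that $y,T y,\dots,T^{n-1}y$ track the cylinder coordinates of $B$ contributes a term $e^{S_nf(y)}\le e^{\sup_{z\in B}S_nf(z)}$ times a value of $h$ bounded by $\|h\|_\infty$; since the preimage branches of $T^n$ over any point are uniformly controlled in number (at most $N_V^n$) and since $\mathscr B$ covers the space, one obtains
$$\chi^n=\int \mathscr L_f^n h\,d\nu\le \|h\|_\infty\sum_{B\in\mathscr B}e^{\sup_{x\in B}S_nf(x)}\cdot(\text{number of branches over that cylinder}),$$
and the branch count per cylinder is at most a constant (indeed at most the number of cylinders of length one meeting $B$, which is bounded by a constant depending only on $\mathscr A$, not on $n$). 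Taking the infimum over subcovers $\mathscr B$ gives $\chi^n\le C_{\mathscr A}\,\|h\|_\infty\, p_n(T,f,\mathscr A)$ for a constant $C_{\mathscr A}$ independent of $n$. Taking $\frac1n\log$, letting $n\to\infty$, then $\sup$ over fine covers, then $\epsilon\to0$, yields $\log\chi\le P(T,f)$.

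The main obstacle is getting the combinatorics of the covering-definition of pressure to mesh cleanly with the preimage structure of the transfer operator: one must argue that a subcover $\mathscr B\subset\mathscr A\vee\cdots\vee T^{-(n-1)}\mathscr A$ of $\mathbb T$ genuinely ``catches'' every branch of $T^{-n}$, with only a bounded multiplicity overcounting, so that the sum $\sum_{y\in T^{-n}(x)}e^{S_nf(y)}h(y)$ is controlled by $\sum_{B\in\mathscr B}e^{\sup_B S_nf}$ up to a constant factor. An alternative, perhaps cleaner, route that avoids this multiplicity bookkeeping is to choose $\epsilon$ smaller than a Lebesgue number / injectivity scale so that each $B$ meets the image of at most one $n$-th preimage branch; in that case the correspondence is literally one-to-one and the estimate is immediate. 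Either way the inequality $\log\chi\le P(T,f)$ follows; the reverse inequality (and hence equality, needed for Theorem~\ref{t:Theorem 2}) will be handled separately via the variational principle and the Rokhlin-formula computation for $\mu=h\nu$.
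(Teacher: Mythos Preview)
Your proposal is correct, and the ``alternative, cleaner route'' you sketch at the end is precisely the argument the paper gives. The paper simplifies your presentation in two ways. First, it applies the eigenmeasure relation to the constant function rather than to $h$: since $\mathscr L_f^*\nu=\chi\nu$, one has directly
\[
\chi^n=\chi^n\nu(\mathbb T)=\int \mathscr L_f^n\mathds 1\,d\nu,
\]
which avoids carrying the factor $\|h\|_\infty$ and any reference to the eigenfunction. Second, rather than speaking abstractly of an injectivity scale, the paper takes $\epsilon<\varrho_0$, the constant from Lemma~\ref{desigualdade distancia}; that lemma shows $T$ is injective on sets of diameter $<\varrho_0$, so for any $B\in\mathscr B\subset\bigvee_{j=0}^{n-1}T^{-j}\mathscr A$ and any $x\in\mathbb T$, at most one point of $T^{-n}(x)$ can lie in $B$. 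This gives $\mathscr L_f^n\mathds 1(x)\le\sum_{B\in\mathscr B}\exp(\sup_B S_nf)$ pointwise, hence $\chi^n\le p_n(T,f,\mathscr A)$ with no multiplicative constant at all, and the conclusion follows. Your first route, with the vaguer ``branch count per cylinder bounded by a constant depending only on $\mathscr A$'', would also work but is strictly more cumbersome than this.
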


\begin{proof}
	Let $ \mathscr A $ be an open cover of $\mathbb T$ with diameter less than $ \varrho_0 $, the positive constant described in Lemma~\ref{desigualdade distancia}.
	If $\mathscr B$ is a finite subcover  of $\mathbb T$ contained in $  \vee_{j=0}^{n-1} T^{-j}(\mathscr A) $, by the very definition of  $ \varrho_0 $, for all $ x \in \mathbb T $ 
	any two distinct points of $ T^{-n}(x) $ belong to distinct elements of $\mathscr B$.
	Then 
	\begin{equation*}
	\chi^n=\chi^n \nu(\mathbb T)= \int \mathscr L_f^n \mathds 1 \, d\nu
	\le \int  \sum_{B \in \mathscr B} \exp(\sup_{B}S_nf) \, d\nu =  \sum_{B \in \mathscr B} \exp(\sup_{B}S_nf).
        \end{equation*}
        Taking the infimum among all finite subcovers  contained in $  \vee_{j=0}^{n-1} T^{-j}(\mathscr A) $, we obtain	
        $
	\log \chi \le \frac{1}{n} \log  p_n(T, f,\mathscr A),
	$
	which yields $\log \chi \le P(T,f) $.
\end{proof}

Given $ m \in M(\mathbb T, T) $ and a measurable function $ \phi : \mathbb T \to \mathbb R $, keeping in mind~\eqref{Property Jacobian}, consider now for $m$-almost every $ x \in \mathbb T $
\begin{equation*}
\mathscr J_{m}(\phi)(x):=\sum_{y\in T^{-1}(x)}\frac{1}{J_m(T)(y)} \phi(y).
\end{equation*}
We highlight two well-known main properties:
\begin{align}
		\int \phi \,  dm & = \int \mathscr J_{m}(\phi) \, dm,  \label{igualdade J} \\
		\int \mathscr J_{m}(\log \psi) \, dm & \le  \log \int \mathscr J_{m}(\psi) \, dm,  \label{desigualdade J}
\end{align}
for every measurable functions $ \phi, \psi : \mathbb T \to \mathbb R $ fulfilling integrability conditions. For details,
see \cite[Section 9.7]{VO16}.

For the next proposition, we also remark a basic fact: the eigenequation $ \mathscr L_f h = \chi h $ considered at the fixed point 
gives us $ \big( e^{f(0)} - \chi \big ) h(0) + \sum_i e^{f(a_i)} h(a_i) = 0 $, from which we conclude that
\begin{equation}\label{ponto fixo e autovalor}
f(0) < \log \chi.
\end{equation}

\begin{proposition} \label{Conditions 1 2 and 3}
The $T$-invariant probability  $\mu=h\nu$  is the unique equilibrium measure associated with $f$, and $$h_{\mu}(T)+\int f \, d\mu = \log \chi = P(T,f).$$
\end{proposition}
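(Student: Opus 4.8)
The plan is to establish the identity $h_\mu(T) + \int f \, d\mu = \log \chi$ first, then to upgrade it to the equality with $P(T,f)$ using Lemma~\ref{pressure log} together with the variational principle, and finally to argue uniqueness via the equality case in~\eqref{desigualdade J}. For the first step, I would apply the Rokhlin formula, which is available thanks to Lemma~\ref{Partition and Rokhlin formula} and Lemma~\ref{lem: lem 4} (since $\mu$ does not charge $0$). Using the explicit Jacobian $J_\mu(T) = \chi \, \frac{h \circ T}{h} e^{-f}$ from Lemma~\ref{lem: lem 4}, one computes
\begin{equation*}
h_\mu(T) = \int \log J_\mu(T) \, d\mu = \log \chi + \int \log(h\circ T) \, d\mu - \int \log h \, d\mu - \int f \, d\mu.
\end{equation*}
By $T$-invariance of $\mu$, the two middle integrals cancel (one must check $\log h$ is $\mu$-integrable, which is immediate since $h$ is continuous and strictly positive on the compact $\mathbb{T}$), yielding $h_\mu(T) + \int f \, d\mu = \log \chi$. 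Combined with Lemma~\ref{pressure log} ($\log\chi \le P(T,f)$) and the variational principle~\eqref{principio variacional} ($h_\mu(T) + \int f \, d\mu \le P(T,f)$), but also the reverse inequality coming from the fact that this specific $\mu$ realizes $\log\chi$, we would get $\log\chi = P(T,f)$; more carefully, $P(T,f) = \sup_m (h_m(T)+\int f\,dm) \ge h_\mu(T)+\int f\,d\mu = \log\chi$, so together with Lemma~\ref{pressure log} we must argue that no $m$ does strictly better than $\log\chi$, which is exactly the content of the uniqueness part.

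For uniqueness, I would take an arbitrary $m \in M(\mathbb{T},T)$ and show $h_m(T) + \int f \, dm \le \log \chi$, with equality forcing $m = \mu$. The natural approach: if $m$ charges $0$, i.e. $m(\{0\})>0$, then by ergodic decomposition one may reduce to $m = \delta_0$, for which $h_{\delta_0}(T) + \int f \, d\delta_0 = f(0) < \log\chi$ by~\eqref{ponto fixo e autovalor}; convex combinations with a non-charging part are handled by affineness of $m \mapsto h_m(T) + \int f\,dm$ and the non-charging case below. If $m$ does not charge $0$, then Lemma~\ref{Partition and Rokhlin formula} applies and Rokhlin's formula gives $h_m(T) = \int \log J_m(T)\,dm$. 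Writing $\tilde f = f + \log h - \log h\circ T - \log\chi$ and using~\eqref{Property Jacobian} together with $\mathscr{J}_m(\mathds 1) = 1$, a computation with~\eqref{igualdade J} and~\eqref{desigualdade J} gives
\begin{equation*}
h_m(T) + \int f\,dm - \log\chi = \int \mathscr{J}_m\!\big( \log(e^{\tilde f} J_m(T)) \big)\,dm \le \log \int \mathscr{J}_m\!\big( e^{\tilde f} J_m(T) \big)\,dm = \log \int \mathscr{L}_{\tilde f}\mathds 1 \, dm = 0,
\end{equation*}
using $\mathscr{L}_{\tilde f}\mathds 1 = \mathds 1$. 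Hence $h_m(T) + \int f\,dm \le \log\chi$ for all $m$, which combined with the first step proves $P(T,f) = \log\chi$ and that $\mu$ is an equilibrium state.

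The equality case is the delicate point. Equality in~\eqref{desigualdade J} (a Jensen-type inequality applied fiberwise with the probability weights $1/J_m(T)(y)$) forces, for $m$-almost every $x$, the function $y \mapsto e^{\tilde f(y)} J_m(T)(y)$ to be constant on $T^{-1}(x)$; since those constants must average (with the same weights) to $1$, the constant is $1$, i.e. $J_m(T) = e^{-\tilde f} = \chi \frac{h\circ T}{h} e^{-f} = J_\mu(T)$ $m$-almost everywhere. Two $T$-invariant probabilities that do not charge $0$ and share the same Jacobian must coincide: this is a standard fact (uniqueness of the measure with a prescribed Jacobian, or equivalently the Gibbs/conformality property forces equality), which I would invoke or prove by a density argument on the generating partition $\{\mathscr{A}_n\}$ from Lemma~\ref{Partition and Rokhlin formula}. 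I expect this last step — pinning down that the common Jacobian and the common generating partition structure force $m = \mu$, rather than merely that $m$ is an equilibrium state — to be the main obstacle, and it is where the eigenequation is genuinely used (via~\eqref{ponto fixo e autovalor}) to rule out the Dirac mass at the indifferent fixed point as a competing equilibrium state, exactly as announced in the introduction.
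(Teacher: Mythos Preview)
Your overall architecture matches the paper's: compute $h_\mu(T)+\int f\,d\mu=\log\chi$ via Rokhlin and the explicit Jacobian, combine with Lemma~\ref{pressure log} and a Jensen-type estimate through~\eqref{igualdade J}--\eqref{desigualdade J} to get $P(T,f)=\log\chi$, and analyze the equality case to obtain $J_m(T)=J_\mu(T)$ $m$-a.e. The treatment of the atom at $0$ via~\eqref{ponto fixo e autovalor} is also in the right spirit (the paper restricts to ergodic equilibrium states and uses Birkhoff to identify any ergodic $m$ with $m(\{0\})>0$ as $\delta_0$, which is a bit cleaner than a full ergodic decomposition, but your sketch is fine).

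The genuine gap is the very last implication, ``two invariant probabilities not charging $0$ with the same Jacobian coincide''. This is \emph{not} a standard fact, and a density argument on the generating partition does not close it: from $J_m(T^n)=J_\mu(T^n)$ you only get relations of the form $m(A_i)=\int_C J_m(T^n)\,dm$ for cylinders $C\in\mathscr A_n$ with $T^n(C)=A_i$, which compare $m$ at level $n$ with $m$ at level $0$, not $m$ with $\mu$. The paper resolves this by invoking Proposition~\ref{convergence of iterates}: from $J_m(T)=J_\mu(T)$ one has $\mathscr J_m^n(\phi)=\mathscr J_\mu^n(\phi)=\frac{1}{\chi^n h}\,\mathscr L_f^n(\phi h)$ $m$-a.e., and the latter converges \emph{uniformly} to $\int\phi\,d\mu$; since $\int\phi\,dm=\int\mathscr J_m^n(\phi)\,dm$ by~\eqref{igualdade J}, dominated convergence gives $\int\phi\,dm=\int\phi\,d\mu$ for every continuous $\phi$. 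So the missing ingredient in your proposal is precisely the uniform convergence of iterates already established in Theorem~\ref{t:Theorem 1}; without it, equality of Jacobians alone does not force $m=\mu$.
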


\begin{proof} 
        Let $ m $ be an equilibrium state associated with $ f $.
        We claim that $ m $ does not charge the indifferent fixed point.
        Indeed, replacing $m$ by one of its ergodic components if necessary, we can assume the $m$ is ergodic. 
	Suppose by contradiction that $m(\{0\})>0$. 
	Then for any measurable set $B$ with $m(B)=0$, we have $0\notin B$, so that the Dirac measure $\delta_0$ supported at the fixed point $0$ satisfies  $\delta_{0}(B)=0$, 
	which means that  $\delta_0$ is  absolutely continuous with respect to $m$.
	Birkhoff's ergodic theorem ensures that for a bounded measurable function $\phi$, 
	\begin{equation}\label{Birkhoff}
	\lim_{n\to \infty }\frac{1}{n} S_n(\phi)(x) =\tilde \phi(x), \qquad \text{  $m$-almost every $x\in\mathbb T$},
	\end{equation}
	where $\tilde \phi $ is $m$-almost everywhere constant and equals to $ \int \phi \, d m$. 
	Since $\delta_0$ is absolutely continuous with respect to $m$, equality~\eqref{Birkhoff} holds for $\delta_0$-almost every $x\in\mathbb T$. 
	In particular, $\int \phi \, d\delta_0 = \int \tilde\phi \, d\delta_0 = \int \phi  \, dm $, and we conclude that $m= \delta_0$. 
	Inequality~\eqref{ponto fixo e autovalor} and Lemma~\ref{pressure log} guarantee that  $\int f \, d\delta_0 < P(T,f)$. 
	Hence $m= \delta_0$ is not an equilibrium measure, which is  a contradiction. 
	
Hence by the $T$-invariance of  $m$, this probability  does not give any mass to the pre-images of $0$. 
Applying thus Lemma~\ref{Partition and Rokhlin formula},  $m$ admits a Jacobian $J_m(T)$ that satisfies  $h_m(T)= \int \log  J_m(T) \, dm$.
We use~\eqref{igualdade J} and~\eqref{desigualdade J} to see that
	\begin{align*}
	P(T,f) - \log \chi 
	= & \int \log  J_m(T) \, dm+ \int f\, dm - \log \chi \\
	= & \int \log \big(\chi^{-1} \frac{h}{h\circ T} \, e^f  J_m(T)\big) \, dm \\
	= &\int \mathscr J_{m}\Big(\log \big(\chi^{-1} \frac{h}{h\circ T} \, e^f  J_m(T)\big) \Big) \, dm \\
	\le & \log \int \mathscr J_{m} \big(\chi^{-1} \frac{h}{h\circ T} \, e^f  J_m(T)\big) \, dm \\
	= & \log \int \frac{1}{\chi h} \, \mathscr{L}_f(h) \,dm = 0.
	\end{align*}
	Therefore, together with Lemma~\ref{pressure log}, we get $P(T,f) = \log \chi.$

	 By Lemma~\ref{lem: lem 4}, the Jacobian of $T$ with respect to $\mu= h\nu$ is  $J_\mu(T)=
	\chi\, \frac{h\circ T}{h} e^{-f} $. Note then that  
	\begin{align*}
	h_\mu(T) = &\int \log J_{\mu}(T) \, d\mu = \int \log\big( \chi \, \frac{h}{h \circ T} \, e^{-f} \big) \, d\mu \\
	= & -\int f \, d\mu + \log \chi=  -\int f \, d\mu + P(T,f), 
	\end{align*}
	which shows that $ \mu $ is an equilibirum state.
	
	Concerning uniqueness, note first that when $ h_m(T) = \int \log J_m(T) \, dm $ (which we already showed to be necessarily the case for an equilibrium state), by~\eqref{igualdade J} we have
	$$ h_m(T) + \int f \, dm - P(T, f) = \int \mathscr J_m  \Big(  \log \big(\chi^{-1} \frac{h}{h\circ T} \, e^{f} \big) + \log J_{m}(T) \Big) \, dm. $$
	It is well known that if  $p_1,\cdots, p_n$ are nonnegative real numbers such that $\sum_i p_i =1$ and $b_1,\cdots, b_n$ are arbitrary real numbers, then 
	\begin{equation*}
	\sum_{i=1}^n \big( p_i b_i-p_i\log p_i \big) \le \log \big(\sum_{i=1}^n e^{b_i}\big)
	\end{equation*}	
	with equality only when $p_i= \frac{e^{b_i}}{\sum_j  e^{b_j}} $. 
	Therefore, thanks to~\eqref{Property Jacobian}, we get
	\begin{align*}
	\mathscr J_m  \Big(  \log \big(\chi^{-1} \frac{h}{h\circ T} \, e^{f} \big) + \log J_{m}(T) \Big) \le \log \big(\frac{1}{\chi h} \mathscr L_f h \big) = 0, 
	\end{align*}
	with  equality  if, and only if, $J_m(T)= \chi \, \frac{h\circ T}{h} \, e^{-f}= J_\mu(T)$, $m$-almost everywhere. 
	In particular, to obtain that $ \mu $ is the unique equilibrium state associated with $ f $, it suffices to argue that $ \mu $ is the only invariant probability that admits 
	$ \chi \, \frac{h\circ T}{h} \, e^{-f} $ as its (almost everywhere) Jacobian.	By definition  $\mathscr{J}^n_m(\phi)= \mathscr{J}^n_\mu(\phi)$ $m$-almost everywhere, for every continuous function $\phi$ and all $ n \ge 1 $.
        Besides, since $ \prod_{j=0}^{n-1}  J_\mu(T) \circ T^j = \chi^n \frac{h\circ T^n}{h} \, e^{-S_nf}$, we see that
        $$\mathscr J_{\mu}^n(\phi )=\frac{1}{\chi^{n} h}\mathscr L_f^n(\phi h). $$ 
	Thus, from Proposition~\ref{convergence of iterates},  $ \mathscr J_{\mu}^n(\phi )  \to  \int \phi \, d\mu$ uniformly as $ n $ tends to infinity.  
	However, by~\eqref{igualdade J} $ \int \phi \, dm = \int \mathscr J_m^n (\phi) \, dm$.
	Hence, applying the dominated convergence theorem, we have,  for any continuous function $\phi $,  $\int \phi \, dm= \int \phi \, d\mu $, so that $m= \mu$.
\end{proof}
	
	\subsubsection*{Gibbs measure}
	
	In order to show the Gibbs' property obeyed by the probability $ \mu = h \nu $, 
	for every $x \in \mathbb T$,  $r>0$ and $n \ge 0$, define the corresponding dynamic ball 
	$$ B(x, n, r):= \{y \in \mathbb T: d(T^j(x), T^j(y)) < r, \, \, j=0,1,\cdots,  n\}. $$
	
	\begin{proposition}\label{Prop1} 
	The equilibrium state $\mu$ is a Gibbs measure: given $ r\in(0,\varrho_1) $ (where $\varrho_1$ is the constant  from Definition~\ref{modulosepreimagens}), 
	there exists a constant $ K_r > 0 $ such that  for  $ x\in \mathbb T$ and $ n \ge 1$
		$$K_r^{-1} \le \frac{\mu\big(B(x, n,r)\big)}{e^{S_n f(x)-nP(T, f)}}\le K_r.$$
	\end{proposition}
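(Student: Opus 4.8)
The plan is to compute $\mu\big(B(x,n,r)\big)$ exactly, up to bounded multiplicative errors, by pushing the indicator of the dynamic ball through iterates of the normalized transfer operator. Put $\tilde f := f+\log h-\log h\circ T-\log\chi$, so that $\mathscr L_{\tilde f}\mathds 1=\mathds 1$, $\mathscr L_{\tilde f}^*\mu=\mu$, and $S_n\tilde f(y)=S_nf(y)+\log h(y)-\log h(T^ny)-n\log\chi$; recall also $P(T,f)=\log\chi$ from Proposition~\ref{Conditions 1 2 and 3}, hence $e^{S_nf(x)-nP(T,f)}=\chi^{-n}e^{S_nf(x)}$. Since $h>0$ is continuous on the compact $\mathbb T$, we have $0<\min h\le\max h<\infty$.

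The core step is a combinatorial one-branch statement: for $0<r<\varrho_1$, $x\in\mathbb T$, $n\ge1$, and any $z\in\mathbb T$, the set $T^{-n}(z)\cap B(x,n,r)$ is empty if $d(z,T^nx)\ge r$, and a single point $y_n(z)$ if $d(z,T^nx)<r$. Emptiness is immediate since $y\in T^{-n}(z)$ forces $d(T^nx,T^ny)=d(T^nx,z)$. For existence, apply the $T$-compatibility of $\Omega$ with respect to $\omega$ to an arbitrary infinite extension of the pre-orbit $x_k:=T^{n-k}(x)$, $0\le k\le n$, of $x_0=T^nx$, together with $y_0:=z$; the resulting pre-orbit $\{y_k\}$ satisfies $d(x_k,y_k)\le d(x_0,y_0)=d(T^nx,z)<r$ for all $k$, and since $x_{n-j}=T^j(x)$ and $y_{n-j}=T^j(y_n)$, this reads $d(T^jx,T^jy_n(z))<r$ for $0\le j\le n$, i.e. $y_n(z)\in B(x,n,r)$. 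Uniqueness follows from the observation made in the proof of Proposition~\ref{compatibilidadesuficiente} that two distinct preimages of a point are at distance at least $\varrho_0>2\varrho_1$: a backward induction on $j=n,n-1,\dots,0$ shows every $y\in T^{-n}(z)\cap B(x,n,r)$ coincides with $y_n(z)$.

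Approximating $\mathds 1_{B(x,n,r)}$ from below by continuous functions (the ball is open), and using $\mathscr L_{\tilde f}^*\mu=\mu$ together with monotone convergence both in the $\mu$-integral and in the finite sum defining $\mathscr L_{\tilde f}^n$, the previous step yields
\begin{align*}
\mu\big(B(x,n,r)\big)
&= \int \mathscr L_{\tilde f}^n\mathds 1_{B(x,n,r)}\, d\mu
= \int_{B(T^nx,r)} e^{S_n\tilde f(y_n(z))}\, d\mu(z) \\
&= \chi^{-n}\int_{B(T^nx,r)} e^{S_nf(y_n(z))}\,\frac{h(y_n(z))}{h(z)}\, d\mu(z).
\end{align*}
Now $y_n(z)$ is the $n$-th term of the pre-orbit of $z$ associated by $T$-compatibility to the pre-orbit $(T^{n-k}x)_k$ of $T^nx$, with $d(T^nx,z)<r$, so estimate~\eqref{variation sum} gives $|S_nf(x)-S_nf(y_n(z))|\le\kappa_f\,\Omega(r)$ (using that $\Omega$ is non-decreasing); combining this with $\min h\le h\le\max h$ we obtain
$$e^{-\kappa_f\Omega(r)}\,\frac{\min h}{\max h}\,\mu\big(B(T^nx,r)\big)\;\le\;\frac{\mu\big(B(x,n,r)\big)}{\chi^{-n}e^{S_nf(x)}}\;\le\;e^{\kappa_f\Omega(r)}\,\frac{\max h}{\min h}\,\mu\big(B(T^nx,r)\big).$$

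It remains to bound $\mu\big(B(T^nx,r)\big)$ uniformly: trivially it is $\le1$, and I claim $c_r:=\inf_{w\in\mathbb T}\mu\big(B(w,r)\big)>0$. For this I show $\mu$ has full support. Since $h>0$, it suffices that $\nu$ has full support: if some nonempty open $U$ had $\nu(U)=0$, pick $0\le\psi\in C^0(\mathbb T)$ with $\psi>0$ on $U$ and $\psi=0$ off $U$; then $\int\mathscr L_f^M\psi\, d\nu=\chi^M\int\psi\, d\nu=0$ for every $M$, yet by topological exactness $T^M(U)=\mathbb T$ for some $M$, which makes $\mathscr L_f^M\psi$ a strictly positive continuous function and forces $\int\mathscr L_f^M\psi\, d\nu>0$, a contradiction. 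As $w\mapsto\mu\big(B(w,r)\big)$ is lower semicontinuous (Fatou) and $\mathbb T$ compact, full support gives $c_r>0$. Taking $K_r:=c_r^{-1}\,e^{\kappa_f\Omega(r)}\,\frac{\max h}{\min h}$ and recalling $\log\chi=P(T,f)$ finishes the proof. The main obstacle is the combinatorial one-branch statement, i.e. the careful use of $T$-compatibility (and of $\varrho_1<\varrho_0/2$ for uniqueness); the rest is bookkeeping with $\mathscr L_{\tilde f}$ and the already established facts $P(T,f)=\log\chi$ and $h>0$.
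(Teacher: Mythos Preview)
Your proof is correct and closely parallels the paper's, but from the dual side: you push the indicator of $B(x,n,r)$ forward via $\mathscr L_{\tilde f}^n$ (using $\mathscr L_{\tilde f}^*\mu=\mu$) to land on $B(T^nx,r)$, whereas the paper pulls $\mu$ forward from $B(x,n,r)$ to $T^n\big(B(x,n,r)\big)$ through the Jacobian $J_\mu(T^n)=\chi^n\,\frac{h\circ T^n}{h}\,e^{-S_nf}$. Your one-branch statement is precisely the assertion that $T^n:B(x,n,r)\to B(T^nx,r)$ is a bijection, which the paper also uses but records less explicitly (injectivity via Lemma~\ref{desigualdade distancia}, surjectivity via the pairing of Definition~\ref{modulosepreimagens}); the distortion control through~\eqref{variation sum} is identical. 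The one substantive difference lies in the lower bound on the static ball: the paper obtains $\inf_y\mu\big(B(y,1,r)\big)>0$ by a finite-cover argument together with a Jacobian estimate comparing $\mu\big(B(y,1,r)\big)$ to $\mu(\mathbb T)=1$; you instead show directly that $\nu$ (hence $\mu$) has full support from topological exactness and then conclude via lower semicontinuity of $w\mapsto\mu\big(B(w,r)\big)$ on the compact $\mathbb T$. Your route is a bit more streamlined; the paper's gives an explicit quantitative bound. One minor caveat: your uniqueness step uses $2r<2\varrho_1\le\varrho_0$, which is guaranteed by the choice $\varrho_1\le\varrho_0/2$ made in the proof of Proposition~\ref{compatibilidadesuficiente} but is not formally part of Definition~\ref{modulosepreimagens}; this is harmless since $\varrho_1$ can always be shrunk, and the paper's own proof makes the same implicit appeal to Lemma~\ref{desigualdade distancia}.
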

	
	\begin{proof}
		It is well known that the Jacobian of $ T^n $, $ n \ge 1 $, with respect to $ \mu $ may be described (almost everywhere) as $  \prod_{j=0}^{n-1}  J_\mu(T) \circ T^j  $.
		Therefore, thanks to Lemma~\ref{lem: lem 4} and  Proposition~\ref{Conditions 1 2 and 3}, we have 
		\begin{equation}\label{eq: n jacobian}
		J_\mu(T^n)= e^{n P(T, f)} \frac{h\circ T^n}{h} e^{-S_n f}.
		\end{equation}
		Let $r\in (0, \varrho_1)$,  $x\in \mathbb T$ and $ n \ge 1$.
		Then for $z \in B(x, n, r)$, since $ d(T^n(z), T^n(x)) < \varrho_1 $, there is a unique pre-orbit  $\{ z_n\}$ of $T^n(z)$ satisfying the properties in Definition~\ref{modulosepreimagens}.
		In particular,   $ z_{n-k} = T^k(z)$ for $ k = 0, 1, \ldots, n $. Hence, from~\eqref{variation sum}, we conclude that
		\begin{equation}\label{eq: sum variation}
		\vert S_n f(x)-S_nf(z) \vert \le \kappa_f \, \Omega(1/2), \qquad \forall \, z \in  B(x, n, r).
		\end{equation}
	        By Lemma~\ref{desigualdade distancia},  $T^n\vert_{B(x,n,r)}$ is injective. 
		Moreover, being an element of $ \Lambda $, it follows from inequality~\eqref{extensao propriedade local de Lambda} that 
		the eigenfunction $ h $ satisfies $e^{-L \kappa_f \Omega(1/2)} \le  \frac{h(T^n(z))}{ h(z)}\le e^{L \kappa_f \Omega(1/2)}$ for any $ z \in \mathbb T $ and $ n \ge 1 $. 
		Hence, using~\eqref{eq: n jacobian} and~\eqref{eq: sum variation}, we have
		\begin{align}
		 \mu\big(T^n  (B(x,n,r)) \big)
		&=\int_{B(x,n,r)}  J_\mu(T^n)(z) \, d\mu(z) \nonumber \\
		&=\int_{B(x,n,r)} e^{n P(T, f)-S_n f(z)} \frac{h\circ T^n(z)}{h(z)} \, d\mu(z). \nonumber \\
		& \ge e^{-L \kappa_f \Omega(1/2)} e^{nP(T, f)- S_n f(x)}\int_{B(x,n,r)} e^{S_n f(x)-S_nf(z)} \,  d\mu(z) \nonumber \\
		&\ge K^{-1} \,\frac{\mu(B(x,n,r))}{e^{S_n f(x)-nP(T, f)}}, \label{majoracaoGibbs}
		\end{align}
		where $K = e^{(L+1)\kappa_f \Omega(1/2)}$. Similarly
		\begin{align}
		 \mu\big(T^n  (B(x,n,r)) \big)
		& \le e^{L \kappa_f \Omega(1/2)} e^{nP(T, f)- S_n f(x)}\int_{B(x,n,r)} e^{S_n f(x)-S_nf(z)} \,  d\mu(z) \nonumber \\
		&\le K \,\frac{\mu(B(x,n,r))}{e^{S_n f(x)-nP(T, f)}}. \label{minoracaoGibbs}
		\end{align}
		
		Since the uniqueness of pre-orbits in Definition~\ref{modulosepreimagens} ensures that $ T^n (B(x,n,r)) = T (B(T^{n-1} (x),1,r)) $, a particular application of~\eqref{majoracaoGibbs} and~\eqref{minoracaoGibbs} shows that the value 
		$  \mu\big(T^n  (B(x,n,r)) \big) $ belongs to the interval
		$$  \Big( \frac{K^{-1}}{e^{\max f - P(T,f)}} \, \inf_{y \in \mathbb T} \mu(B(y, 1, r)), \,  \frac{K}{e^{\min f - P(T,f)}} \, \sup_{y \in \mathbb T} \mu(B(y, 1, r))  \Big). $$
		Hence to complete the proof, it remains to argue that $  \inf_{y \in \mathbb T} \mu(B(y, 1, r)) > 0 $. 
		In fact, as the dynamics is topologically exact, for each $ y \in \mathbb T $, 
		there exists a positive integer $ M_y $ such that the restriction of $ T^{M_y -1} $ on $ B(y, 1, r) $ is injective and has image strictly contained in $  \mathbb T $, and also that $ T^{M_y} (B(y, 1, r)) = \mathbb T $.
		By continuity, $ M_y $ is locally constant: for any $ \hat y $ sufficiently close to $ y $, we have $ M_{\hat y} = M_y $.
		By compactness, one may find a finite cover $ \{ B(y_i, 1, r) \} $ of $ \mathbb T $ with $ M_i := M_{y_i} $ constant on each $ B(y_i, 1, r) $. 
		Now, for an arbitrary $ y \in \mathbb T $, consider $ i $ such that $ y \in B(y_i, 1, r) $ as well as the corresponding $ M_i $.
		Fix then a half-open arc $ A_y \subset   B(y, 1, r) $ for which $ T^{M_i} : A_y \to \mathbb T $ is bijective.
		Once again taking advantage of Jacobians, we see that
		\begin{align*}
		1 = \mu(\mathbb T) & = \mu\big( T^{M_i}(A_y )\big) \\
		& \le e^{L \kappa_f \Omega(1/2)} e^{M_i P(T, f)} \int_{A_y} e^{-S_{M_i} f(z)} \,  d\mu(z) \\
		& \le e^{L \kappa_f \Omega(1/2)} e^{M_i (P(T, f) - \min f)} \, \mu(A_y),
		\end{align*} 
		which yields
		$$ \inf_{y \in \mathbb T} \mu(B(y, 1, r)) \ge  e^{- L \kappa_f \Omega(1/2)} \min_i e^{M_i (\min f - P(T, f))} > 0. $$
	\end{proof}

\footnotesize{

}

\end{document}